\newtheorem{prop}{Proposition}[section]
\newtheorem{thm}[prop]{Theorem}
\newtheorem{lem}[prop]{Lemma}
\newtheorem{defn}[prop]{Definition}
\newtheorem{assu}[prop]{Assumption}
\theoremstyle{definition}
\newtheorem*{ack}{Acknowledgments}
\theoremstyle{remark}
\newtheorem{rem}[prop]{Remark}
\numberwithin{equation}{section}
\begin{document}

\title[Volume preserving nonhomogeneous Gauss curvature flow]{Volume preserving nonhomogeneous Gauss curvature flow in hyperbolic space}

\author[Y. Wei]{Yong Wei}
\address{School of Mathematical Sciences, University of Science and Technology of China, Hefei 230026, P.R. China}
\email{\href{mailto:yongwei@ustc.edu.cn}{yongwei@ustc.edu.cn}}
\author[B. Yang]{Bo Yang}
\address{Department of Mathematical Sciences, Tsinghua University, Beijing 100084, P.R. China}
\email{\href{mailto:ybo@tsinghua.edu.cn}{ybo@tsinghua.edu.cn}}
\author[T. Zhou]{Tailong Zhou}
\address{School of Mathematics, Sichuan University, Chengdu 610065, Sichuan, P.R. China}
\email{\href{mailto:zhoutailong@scu.edu.cn}{zhoutailong@scu.edu.cn}}

\keywords{Gauss Curvature flow, Nonhomogeneous, Hyperbolic space, Convex hypersurface.}
\subjclass[2020]{ 53E10; 53C42}

\begin{abstract}
We consider the volume preserving flow of smooth, closed and convex hypersurfaces in the hyperbolic space $\mathbb{H}^{n+1}$ with speed given by a general nonhomogeneous function of the Gauss curvature. For a large class of speed functions, we prove that the solution of the flow remains convex, exists for all positive time $t\in [0,\infty)$ and converges to a geodesic sphere exponentially as $t\to\infty$ in the smooth topology. A key step is to show the $L^1$ oscillation decay of the Gauss curvature to its average along a subsequence of times going to the infinity, which combined with an argument using the hyperbolic curvature measure theory implies the Hausdorff convergence.
\end{abstract}

\maketitle



\section{Introduction}
Let $X_0:M^n\to\mathbb{H}^{n+1}$ be a smooth embedding such that $M_0=X_0(M)$ is a smooth, closed and convex hypersurface in the hyperbolic space $\mathbb{H}^{n+1}$. We consider the volume preserving curvature flow $X:M^n\times [0,T)\to \mathbb{H}^{n+1}$ satisfying
\begin{equation}\label{flow-VMCF}
	\left\{\begin{aligned}
		\frac{\partial}{\partial t}X(x,t)=&~\Big(\phi(t)-f(K(x,t))\Big)\nu(x,t),\\
		X(\cdot,0)=&~X_0(\cdot),
	\end{aligned}\right.
\end{equation}
where $\nu$ is the unit outward normal of $M_t=X(M,t)$, $K$ is the Gauss curvature of $M_t$ and
\begin{equation}\label{eqphi}
	\phi(t)=\frac{1}{|M_t|}\int_{M_t}f(K)\,\mathrm{d}\mu_t
\end{equation}
such that the domain $\Omega_t$ enclosed by $M_t$ has a fixed volume $|\Omega_t|=|\Omega_0|$ along the flow \eqref{flow-VMCF}. We assume that the function $f:[0,\infty)\to \mathbb{R}$ satisfies the following conditions:
\begin{assu}\label{ass} $f:[0,+\infty)\to\mathbb{R}$ is continuous and $C^2$ differentiable in $(0,+\infty)$, and satisfies the following conditions:
	\begin{enumerate}\setlength{\itemsep}{2pt}
		\item\label{p1} $f(x)>0$, $f'(x)>0$ for $x>0$;
		\item\label{p2} $\lim\limits_{x\to{+\infty}} f(x)=+\infty$;
		\item\label{p3} $\exists \Theta>0$ such that $f(x)\leq \Theta f'(x)x$ for $x>0$;
		\item\label{p4}  $xf''(x)+f'(x)\geq 0$ for $x>0$.
	\end{enumerate}
\end{assu}
\begin{rem}
	A particular example of functions satisfying Assumption \ref{ass} is
\begin{equation*}
  f(x)=x^{\alpha}, \quad \alpha>0.
\end{equation*}
In this case, the flow \eqref{flow-VMCF} is the classical volume preserving $\alpha$-Gauss curvature flow \eqref{s1.Kalpha} in $\mathbb{H}^{n+1}$, which was considered by the authors in \cite{WYZ2022}.  Other natural examples include the linear combination of powers
\begin{equation*}
  f(x)=\sum_{i=1}^{d}a_ix^{k_i},\quad a_i>0,\quad k_i>0
\end{equation*}
and
\begin{align*}
  f(x)=&\mathrm{e}^x-1,\\
 f(x)=&x^\alpha\ln(x+1),\quad \alpha>0.
\end{align*}
Moreover, any positive strictly increasing convex $C^2$ function $f$ with $f(0)=0$ satisfies Assumption \ref{ass}.
\end{rem}

Before describing our main result, we recall the following definitions of convexity of hypersurfaces in the hyperbolic space $\mathbb{H}^{n+1}$.
\begin{defn}$\ $
\begin{enumerate}
  \item A smooth closed hypersurface $M$ in $\mathbb{H}^{n+1}$ is called convex if all of its principal curvatures $\kappa_i, i=1,\cdots,n$ are positive everywhere on $M$.
      We also say that a hypersurface is weakly convex if all principal curvatures $\kappa_i\geq 0$.
  \item  A smooth closed hypersurface $M$ in $\mathbb{H}^{n+1}$ is called $h$-convex (also called horospherically convex), if all of its principal curvatures $\kappa_i\geq 1, i=1,\cdots,n$ everywhere on $M$.
  \item We say that a smooth closed hypersurface $M$ in $\mathbb{H}^{n+1}$ has positive sectional curvatures if its sectional curvatures $K(e_i,e_j)=\kappa_i\kappa_j-1>0$ for all $i\neq j\in  \{1,\dots,n\}$. This condition is weaker than $h$-convexity but is stronger than the convexity $\kappa_i>0,i=1,\cdots,n.$
\end{enumerate}
\end{defn}

Our main result of this paper is the following convergence result for the flow \eqref{flow-VMCF} with convex initial hypersurface.
\begin{thm}\label{theo}
Let $X_0:M^n\to \mathbb{H}^{n+1}$ be a smooth embedding such that $M_0=X_0(M)$ is a closed convex hypersurface in $\mathbb{H}^{n+1}$. Then the volume preserving flow \eqref{flow-VMCF} with $f$ satisfying Assumption \ref{ass} has a unique smooth convex solution $M_t$ for all time $t\in[0,\infty)$, and as $t\to \infty$ the solution $M_t$ converges smoothly and exponentially to a geodesic sphere of radius $\rho_{\infty}$ which encloses the same volume as $M_0$.
\end{thm}

The volume preserving mean curvature flow
\begin{equation}\label{eqH}
	\frac{\partial}{\partial t}X(x,t)=(\phi(t)-H)\nu(x,t)
\end{equation}
was introduced by Huisken \cite{Hui87} in 1987 for convex hypersurfaces in the Euclidean space $\mathbb{R}^{n+1}$, and it has been proved that for any smooth convex initial hypersurface, the flow \eqref{eqH} converges smoothly to a round sphere. There are further generalizations of the flow \eqref{eqH} for convex hypersurfaces in $\mathbb{R}^{n+1}$ with $H$ replaced by more general curvature functions including powers of the $k$th mean curvature $\sigma_k(\kappa),k=1,\cdots,n$. See \cite{AW21,BeSin18,BeSin18b,CS10,Mc04,Mc05,Mc17,Sine15} for instance. In particular, Bertini and Sinestrari \cite{BeSin18} considered the volume preserving non-homogeneous mean curvature flow of convex hypersurfaces in $\mathbb{R}^{n+1}$
\begin{equation}\label{s1.NH}
	\frac{\partial}{\partial t}X(x,t)=(\phi(t)-f(H))\nu(x,t),
\end{equation}
for a large class of nonhomogeneous function $f(H)$ of the mean curvature with $f$ satisfying some conditions.

The analogue of the flow \eqref{eqH} in the hyperbolic space $\mathbb{H}^{n+1}$ was first studied by Cabezas-Rivas and Miquel\cite{Cab-Miq2007} in 2007 assuming that the initial hypersurface is $h$-convex. The nonhomogeneous version \eqref{s1.NH} in $\mathbb{H}^{n+1}$ was considered by Bertini and Pipoli \cite{Be-Pip2016} also assuming $h$-convexity initially. Generalizations of the flow \eqref{eqH} in hyperbolic space with $H$ replaced by more general curvature functions were studied in \cite{BenChenWei,BenWei,GLW-CAG,Mak2012,WX}. In most cases, the $h$-convexity is assumed in order to prove the smooth convergence of the solution to geodesic spheres. The main reasons are that the $h$-convexity is convenient for the analysis of the curvature evolution equations and also $h$-convexity implies that the outer radius of the enclosed domain is uniformly controlled by its inner radius. In \cite{BenChenWei}, the first author with Andrews and Chen proved the smooth convergence of volume preserving $k$th mean curvature flows in $\mathbb{H}^{n+1}(n\geq 2)$ for initial hypersurfaces with positive sectional curvatures. This condition is weaker than $h$-convexity but still stronger than the convexity.

Recently, the authors \cite{WYZ2022} considered the volume preserving $\alpha$-Gauss curvature flow
\begin{equation}\label{s1.Kalpha}
	\frac{\partial}{\partial t}X(x,t)=(\phi(t)-K^\alpha)\nu(x,t)
\end{equation}
for convex hypersurfaces in the hyperbolic space $\mathbb{H}^{n+1}$. We showed that for any $\alpha>0$, the flow \eqref{s1.Kalpha} evolves any smooth, closed convex hypersurface in $\mathbb{H}^{n+1}$ to a geodesic sphere. The key ingredient we used in \cite{WYZ2022} is the projection method via the Klein model of the hyperbolic space which was described earlier by the first author and Andrews in \cite{BenWei}. Based on this we treated the flow as an equivalent flow in the Euclidean space and this allows us to derive a time-dependent positive lower bound on the principal curvatures, and a time-dependent upper bound on the Gauss curvature $K$. A continuity argument then implies that the flow exists for all positive time. Moreover, for the convergence result we used the curvature measure theory for convex bodies in $\mathbb{H}^{n+1}$ to show the Hausdorff convergence and an Alexandrov reflection argument to control the center of the inner ball of the evolving domains. The curve case of \eqref{s1.Kalpha} was also treated earlier by the first and second authors in \cite{WY2022}. The results in \cite{WY2022,WYZ2022} are the first results on non-local type volume preserving curvature flows in the hyperbolic space with only convexity required on the initial hypersurface.

Our Theorem \ref{theo} in this paper generalizes the results \cite{WY2022,WYZ2022} on volume preserving $\alpha$-Gauss curvature flow \eqref{s1.Kalpha} to volume preserving nonhomogeneous Gauss curvature flow \eqref{flow-VMCF} for a large class of nonhomogeneous functions. In this paper, instead of projecting the flow \eqref{flow-VMCF} to the Euclidean space as in \cite{WYZ2022}, we prove the curvature lower bound directly by analyzing an auxiliary function (see \S\ref{sec4} for details)
\begin{equation}\label{s1.Q}
	  	Q(p,t)=\log{\mathfrak{b}}+A\rho,
\end{equation}
where $\mathfrak{b}$ is the largest reciprocal of the principal curvatures $\kappa_i, i=1,\cdots,n$, and $\rho$ is the radial function of the evolving hypersurface $M_t$ on the time interval when $M_t$ is star-shaped with respect to some point $p_0$.  This avoids the use of more complicated projection method. To show the upper bound on the Gauss curvature, we employ Tso's method as in \cite{WYZ2022}. Since we only assumed convexity and the speed function $f$ is not necessarily homogeneous, the analysis of $f$ and the terms involving global term $\phi(t)$ need to be carefully treated.

For the long time existence of the flow, we need to show that the solutions remain smooth as long as the curvatures are bounded. For general non-homogeneous function $f$, the flow \eqref{flow-VMCF} may not be concave with respect to spacial second derivatives, we can not apply Krylov-Safonov's result to derive the $C^{2,\alpha}$ estimate of the solution. Instead, we apply a result in \cite[Theorem 6]{And2004} and view $f(K)$ as an increasing function of a concave operator $K^{1/n}$.  To study the asymptotical behavior of $M_t$ as $t\to\infty$, we note that the a prior estimates we obtained depend on the time and may degenerate as time $t\to\infty$. To overcome this difficulty, as in \cite{WYZ2022} we employ the curvature measure theory for convex bodies in hyperbolic space: For general nonhomogeneous function, we still have the monotonicity of the quermassintegral $\mathcal{A}_{n-1}(\Omega_t)$ of the evolving domain $\Omega_t$. This together with the long time existence of the flow implies that
\begin{equation}\label{s1.Kbar}
		\int_{M_{t_i}}{|K-\bar{K}|d\mu_{t_i}}\to 0,\quad \text{as}\,\, t_i\to\infty
	\end{equation}
along a subsequence of times $t_i\to\infty$, where $\bar{K}$ denotes the average integral of $K$. Then we can argue as in \cite{WYZ2022} to conclude the Hausdorff convergence using curvature measure theory and Alexandrov reflection method, and then improve it to the smooth convergence.

Throughout the proof, the presence of the nonhomogeneous speed function $f(K)$ introduces some technical difficulties. Assumption \ref{ass} is crucial for proving the a prior estimates of the flow \eqref{flow-VMCF}. In fact, the flow \eqref{flow-VMCF} is parabolic and has short-time existence due to item \eqref{p1}; items \eqref{p3} and \eqref{p4} are used to prove the time-dependent lower bound on the principal curvatures (see \S \ref{sec4}); and items \eqref{p2} - \eqref{p3} are used to derive the time-dependent upper bound on the Gauss curvature of the flow (see \S \ref{sec.upK}).
\begin{rem}$\ $
\begin{enumerate}
  \item We remark that nonhomogeneous curvature flows have been considered previously in the literature for the other type of curvature flows. See \cite{AleSin10,Guan23,LiLv20} for the contracting curvature flow case  and \cite{ChoT97,ChoT98,Pip22} for the expanding curvature flow case.
  \item The non-local type volume preserving curvature flows have also been studied in other aspects and in other ambient spaces. We refer the readers to \cite{AndWei20,Cab23} for surveys of the volume preserving type curvature flows and their geometric applications. Some other recent interesting works include  \cite{CSV20} for the volume preserving fractional mean curvature flow in the Euclidean space, \cite{Lam23} for singularity analysis of volume preserving mean curvature flow in the Euclidean space  and \cite{CaSche23} for a new non-local volume preserving mean curvature flow in the sphere.
\end{enumerate}
\end{rem}
The paper is organized as follows: In $\S$\ref{sec2}, we collect some preliminaries including the geometry of hypersufaces in the hyperbolic space, the evolution equations along the flow \eqref{flow-VMCF} and the quermassintegrals in hyperbolic space. In $\S$\ref{sec3}, we give the $C^0$ and $C^1$ estimates of the flow \eqref{flow-VMCF}. In $\S$\ref{sec4}, we prove the time-dependent positive lower bound for the principal curvatures along the flow \eqref{flow-VMCF}, by analyzing the auxiliary function \eqref{s1.Q}. In \S \ref{sec.upK} we prove a time-dependent upper bound on the Gauss curvature $K$.  This implies two-sided curvature bounds of the solution on any finite time interval, and then we obtain the long time existence of the flow \eqref{flow-VMCF} in $\S$\ref{sec5}. In \S \ref{sec.hau}, we show the subsequential Hausdorff convergence of $M_t$ and the convergence of the center of the inner ball of $\Omega_t$ to a fixed point. Finally, in \S \ref{Sec-main proof}, we complete the proof of Theorem \ref{theo}.

\begin{ack}
The research was supported by National Key R and D Program of China 2021YFA1001800 and 2020YFA0713100,  the Fundamental Research Funds for the Central Universities. The second author was also supported by China Postdoctoral Science Foundation No.2024M751605  and Shuimu Tsinghua Scholar Program (No. 2023SM102). The third author was also supported by NSFC (Grant No. 12401064). The authors would like thank Ruijia Zhang for discussions.
\end{ack}

\section{Preliminaries}\label{sec2}
In this section, we collect some preliminary results concerning the geometry of hypersurfaces in hyperbolic space,  the evolution equations for geometric quantities along the flow \eqref{flow-VMCF} and the quermassintegrals for bounded domains in hyperbolic space.
\subsection{Hyperbolic space}
The hyperbolic space $\mathbb{H}^{n+1}, n\geq 1$ can be viewed as a warped product manifold $(\mathbb{R}_{+}\times \mathbb{S}^n,g_{\mathbb{H}^{n+1}})$ with
\begin{equation*}
	g_{\mathbb{H}^{n+1}}=d\rho^2+\sinh^2\rho g_{\mathbb{S}^n},
\end{equation*}
where $g_{\mathbb{S}^n}$ is the round metric on unit sphere $\mathbb{S}^n$. Let $D$ be the Levi-Civita connection on $\mathbb{H}^{n+1}$. The vector field $V=\sinh \rho\partial_\rho$ is a conformal Killing field satisfying $DV=\cosh \rho g_{\mathbb{H}^{n+1}}.$


Let $\Omega$ be a convex domain in $\mathbb{H}^{n+1}$ with a smooth boundary $M=\partial\Omega$. Then $M$ is a smooth convex hypersurface in $\mathbb{H}^{n+1}$. We denote by $g_{ij}, h_{ij}$ and $\nu$ the induced metric, the second fundamental form and the unit outward normal vector of $M$ respectively. The eigenvalues of the Weingarten tensor $h_i^j:=g^{jk}h_{ki}$ are the principal curvatures $\kappa=(\kappa_1,\cdots,\kappa_n)$. As $M$ is convex, there exists a point $p_0\in\Omega$ such that $M$ is star-shaped with respect to $p_0$ and can be written as the radial graph $M=\{(\rho(\theta),\theta),~\theta\in \mathbb{S}^n\}$ with respect to $p_0$ for a smooth function $\rho\in C^\infty(\mathbb{S}^n)$. Equivalently, the support function of $M$ with respect to the point $p_0\in \Omega$ defined by
\begin{equation*}
  u=\langle V,\nu\rangle=\langle \sinh \rho\partial_\rho,\nu\rangle
\end{equation*}
is positive everywhere on $M$.  It is well known that (see e.g.\cite[\S 4]{GL15})
\begin{align}
	g_{ij}&=\rho_{i}\rho_{j}+\sinh^2\rho\sigma_{ij},\label{ex-g}\\
	h_{ij}&=\frac{1}{\sqrt{\sinh^2\rho+|\bar{\nabla} \rho|^2}}(-(\sinh \rho)\rho_{ij}+2(\cosh \rho)\rho_i \rho_j+\sinh^2\rho\cosh \rho\sigma_{ij}),\label{ex-h}\\
\nu&=\frac{1}{\sqrt{1+|\bar{\nabla}\rho|^2/\sinh^2\rho}}\left(1,-\frac{\rho_1}{\sinh^2\rho},\cdots,-\frac{\rho_n}{\sinh^2\rho}\right),\label{ex-nu}\\
      u&=\frac{\sinh^2\rho}{\sqrt{\sinh^2\rho+|\bar{\nabla}\rho|^2}},\label{ex-u}
\end{align}
where $\bar{\nabla}$ is the covariant derivative on $\mathbb{S}^n$ with respect to the round metric $g_{\mathbb{S}^n}=(\sigma_{ij})$ and $\rho_i=\bar{\nabla}\rho, \rho_{ij}=\bar{\nabla}_i\bar{\nabla}_j\rho$. It follows that the Gauss curvature of $M$ can be expressed as a function of $\rho$ and its up to second derivatives:
\begin{equation}\label{eq-Gauss}
	K=\frac{\det h_{ij}}{\det g_{ij}}=\frac{\det(-\sinh \rho\rho_{ij}+2\cosh \rho\rho_i \rho_j+\sinh^2\rho\cosh \rho\sigma_{ij})}{(\sinh^2\rho+|\bar{\nabla} \rho|^2)^{\frac{n+2}{2}}(\sinh \rho)^{2(n-1)}}.
\end{equation}

\subsection{Evolution equations}
Let $M_t$ be a smooth solution to the curvature flow \eqref{flow-VMCF} in the hyperbolic space $\mathbb{H}^{n+1}$. We have the following evolution equations (see \cite{BenWei}) for the induced metric $g_{ij}$, the area element $d{\mu_t}$ and the speed function $f(K)$:
\begin{align}
	\frac{\partial}{\partial t}g_{ij}&=2\left(\phi(t)-f(K)\right)h_{ij},\label{eq-g}\\
	\frac{\partial}{\partial t}d\mu_t&=H\left(\phi(t)-f(K)\right)d\mu_t,\label{eq-dmu}\\
	\frac{\partial}{\partial t}f(K)&=f'\dot{K}^{ij}\left(\nabla_i\nabla_j{f}+(f-\phi(t))(h_i^k h_k^j-\delta_i^j)\right),\label{eq-KK}
\end{align}
where $f'=f'(K)$ is the derivative of $f$, $\dot{K}^{ij}$ denote the derivatives with respect to components of the second fundamental form, and $\nabla$ denotes the Levi-Civita connection on $M_t$ with respect to the induced metric $g_{ij}$.

Let $M_t$ be a smooth convex solution to the flow \eqref{flow-VMCF} on the time interval $[0,T)$ and assume that $\{b_m^n\}$ is the inverse matrix of the Weingarten matrix $\{h_i^j\}$. The following lemma gives parabolic type evolution equations of $h_i^j$ and $b_m^n$.

\begin{lem}
Along the flow \eqref{flow-VMCF}, the Weingarten matrix $h_i^j$ of $M_t$ evolves by
\begin{align} \partial_t{h_i^j}-f'\dot{K}^{k\ell}\nabla_k\nabla_{\ell}{h_i^j}=&(f''\dot{K}^{k\ell}\dot{K}^{pq}+f'\ddot{K}^{k\ell,pq})\nabla_i{h_{k\ell}\nabla^j{h_{pq}}}\nonumber\\
&+f'(HK+\sigma_{n-1}(\kappa))h_i^j-nf'K(h_i^ph_p^j+\delta_i^j)\notag\\
		&+(f-\phi(t))(h_i^k h_k^j-\delta_i^j).\label{ev-Wm}
	\end{align}
Furthermore, if $M_t$ is a smooth convex solution to the flow \eqref{flow-VMCF} on the time interval $[0,T)$ and we assume that $\{b^{k\ell}\}$ is the inverse matrix of the second fundamental form $\{h_{ij}\}$ and $\{b_m^n:=g_{ms}b^{sn}\}$ is the inverse matrix of the Weingarten matrix $\{h_i^j\}$, then $b_m^n$ evolves by
\begin{align}
	\partial_t b_m^n-f'K b^{k\ell}\nabla_k\nabla_{\ell}{b_m^n}=&-f'K b^{k\ell}\left(b_i^n b_m^p b_q^s\nabla_k{h_s^i}\nabla_{\ell}{h_p^q}+b
	_i^n b_m^p b_q^s\nabla_k{h_p^q}\nabla_{\ell}{h_s^i}\right)\notag\\
	&-(f''K^2+f'K)b^{k\ell}b^{pq}b_m^i b_j^n\nabla_i{h_{k\ell}}\nabla^j{h_{pq}}\notag\\
&+f'Kb^{kp}b^{ql}b_m^i b_j^n\nabla_i{h_{k\ell}}\nabla^j{h_{pq}}\notag\\
	&-f'(HK+\sigma_{n-1}(\kappa))b_m^n
	+nf'K(b_m^i b_i^n+\delta_m^n)\notag\\
	&-(f-\phi(t))(\delta_m^n-b_m^i b_i^n).\label{ev-bmn}
\end{align}
\end{lem}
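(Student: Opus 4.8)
The plan is to derive \eqref{ev-Wm} from the standard first-order evolution of the Weingarten map under a curvature flow in a space form, promote it to a parabolic equation by means of a Simons-type commutation identity, and then deduce \eqref{ev-bmn} by purely algebraic differentiation of the defining identity $b_m^k h_k^n=\delta_m^n$. First I would recall the basic evolution equation for the Weingarten matrix along $\partial_t X=(\phi(t)-f(K))\nu$ in $\mathbb{H}^{n+1}$, which, consistently with \eqref{eq-g} and \eqref{eq-KK}, takes the form
\begin{equation*}
	\partial_t h_i^j=\nabla^j\nabla_i f(K)+(f(K)-\phi(t))(h_i^kh_k^j-\delta_i^j),
\end{equation*}
the $-\delta_i^j$ being the contribution of the ambient sectional curvature $-1$; this follows from differentiating $h_i^j=g^{jk}h_{ki}$ together with the Gauss--Weingarten relations, the formula $\partial_t\nu=-\operatorname{grad}(\phi-f(K))$ and the evolution \eqref{eq-g} of the metric (see \cite{BenWei}).

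Next, regarding $F:=f(K)$ as a smooth symmetric function of the second fundamental form and applying the chain rule twice yields $\dot F^{k\ell}=f'\dot K^{k\ell}$ and $\ddot F^{k\ell,pq}=f''\dot K^{k\ell}\dot K^{pq}+f'\ddot K^{k\ell,pq}$, so that
\begin{equation*}
	\nabla^j\nabla_i f(K)=f'\dot K^{k\ell}\nabla^j\nabla_i h_{k\ell}+\big(f''\dot K^{k\ell}\dot K^{pq}+f'\ddot K^{k\ell,pq}\big)\nabla_i h_{k\ell}\nabla^j h_{pq}.
\end{equation*}
The remaining point is to convert $\dot K^{k\ell}\nabla^j\nabla_i h_{k\ell}$ into the divergence-form operator $\dot K^{k\ell}\nabla_k\nabla_\ell h_i^j$ appearing in \eqref{ev-Wm}. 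For this I would use the Codazzi equation $\nabla_i h_{k\ell}=\nabla_k h_{i\ell}$, the Ricci identity for commuting two covariant derivatives on $M_t$, and the Gauss equation $R_{ijk\ell}=-(g_{ik}g_{j\ell}-g_{i\ell}g_{jk})+(h_{ik}h_{j\ell}-h_{i\ell}h_{jk})$ of a hypersurface in $\mathbb{H}^{n+1}$; this produces curvature corrections that are quadratic and linear in $h$. Contracting these with $\dot K^{k\ell}=Kb^{k\ell}$ and simplifying through the identities $\dot K^{k\ell}h_{k\ell}=nK$, $\;b^{k\ell}h_{\ell j}=\delta_j^k$ and $Kb^{k\ell}g_{k\ell}=K\sum_i\kappa_i^{-1}=\sigma_{n-1}(\kappa)$, one checks that several terms cancel in pairs while the survivors combine exactly into $f'(HK+\sigma_{n-1}(\kappa))h_i^j-nf'K(h_i^ph_p^j+\delta_i^j)$; adding the reaction term $(f-\phi(t))(h_i^kh_k^j-\delta_i^j)$ inherited from the basic evolution yields \eqref{ev-Wm}.

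For \eqref{ev-bmn} I would differentiate $b_m^k h_k^n=\delta_m^n$: applying $\partial_t$ or a covariant derivative gives $\partial_t b_m^n=-b_m^i b_j^n\,\partial_t h_i^j$ and likewise $\nabla_\ell b_m^n=-b_m^i b_j^n\nabla_\ell h_i^j$, and iterating the latter yields $\nabla_k\nabla_\ell b_m^n=-b_m^i b_j^n\nabla_k\nabla_\ell h_i^j$ plus terms that are products of two copies of $b$ with $\nabla h\otimes\nabla h$. Since $f'Kb^{k\ell}=f'\dot K^{k\ell}$ is exactly the operator of \eqref{ev-Wm}, subtracting and using the previous line reduces \eqref{ev-bmn} to contracting $-b_m^i b_j^n$ against the right-hand side of \eqref{ev-Wm} and carrying along those quadratic correction terms. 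I would then expand $\dot K^{k\ell}=Kb^{k\ell}$ and $\ddot K^{k\ell,pq}=K(b^{k\ell}b^{pq}-b^{kp}b^{\ell q})$ and use $b_m^i h_i^j=\delta_m^j$, $b_j^n h_p^j=\delta_p^n$: the $\ddot F$-term produces the $(f''K^2+f'K)b^{k\ell}b^{pq}$ and $f'Kb^{kp}b^{\ell q}$ gradient terms on lines two and three, the $h_i^j$-term collapses to $-f'(HK+\sigma_{n-1}(\kappa))b_m^n$, the $\delta_i^j$- and $h_i^kh_k^j$-terms produce the $b_m^ib_i^n$ and $\delta_m^n$ combinations, and the quadratic corrections, after relabeling indices and using $b^{k\ell}=b^{\ell k}$, give the first line of \eqref{ev-bmn}.

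The computation is conceptually routine, so the main obstacle is bookkeeping: one must fix a consistent sign convention for the curvature tensor so that the Gauss equation delivers the term $+f'\sigma_{n-1}(\kappa)h_i^j$ with the correct sign, correctly symmetrize and expand the Hessian $\ddot K^{k\ell,pq}$, and keep track of a large number of index contractions in passing from the $h_i^j$-equation to the $b_m^n$-equation. The one genuinely non-obvious algebraic input is the homogeneity of $K$ (degree $n$ in the $h_{ij}$) together with the identity $\sigma_n\sum_i\kappa_i^{-1}=\sigma_{n-1}$, which must be used at exactly the right places for the $\sigma_{n-1}(\kappa)$ terms to appear.
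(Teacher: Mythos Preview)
Your proposal is correct and follows essentially the same route as the paper. The only difference is one of packaging: the paper quotes the parabolic evolution of $h_i^j$ directly from \cite[Lemma 2.4]{BenWei}, which already has the Simons-type commutation built in, and then simply substitutes $\dot f^{k\ell}=f'\dot K^{k\ell}$, $\ddot f^{k\ell,pq}=f''\dot K^{k\ell}\dot K^{pq}+f'\ddot K^{k\ell,pq}$ together with the identities $\dot K^{k\ell}h_{k\ell}=nK$ and $\dot K^{k\ell}h_k^rh_{r\ell}+\dot K^{k\ell}g_{k\ell}=HK+\sigma_{n-1}(\kappa)$; you instead propose to start from the first-order evolution $\partial_t h_i^j=\nabla^j\nabla_i f(K)+(f-\phi)(h_i^kh_k^j-\delta_i^j)$ and carry out the Codazzi/Ricci/Gauss commutation by hand, which is exactly what goes into the proof of that cited lemma. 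For \eqref{ev-bmn} the two arguments are identical.
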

\begin{proof}
	By \cite[Lemma 2.4]{BenWei}, we have
 \begin{align}
    \partial_t{h_i^j}=&\dot{f}^{k\ell}\nabla_k\nabla_{\ell}{h_i^j}+\ddot{f}^{k\ell,pq}\nabla_i{h_{k\ell}\nabla^j{h_{pq}}}\nonumber\\
    &+(\dot{f}^{k\ell}h_k^rh_{r\ell}+\dot{f}^{k\ell}g_{k\ell})h_i^j-\dot{f}^{k\ell}h_{k\ell}(h_i^p h_p^j+\delta_i^j)\notag\\
    &+(f-\phi(t))(h_i^k h_k^j-\delta_i^j), \label{eq-h_i^j}
 \end{align}
where $\dot{f}^{k\ell}, \ddot{f}^{k\ell,pq}$ denote the first and second derivatives of $f$ with respect to the components of second fundamental form $(h_{ij})$. Since $f=f(K)$ is a function of Gauss curvature $K$, we have
\begin{align}
  \dot{f}^{k\ell}=&f'\dot{K}^{k\ell},\qquad \dot{f}^{k\ell}h_{k\ell}=nf'K,\label{eq-dotf1}\\
  \ddot{f}^{k\ell,pq}=&f'\ddot{K}^{k\ell,pq}+f''\dot{K}^{k\ell}\dot{K}^{pq}
\end{align}
and
\begin{align}
\dot{f}^{k\ell}h_k^rh_{r\ell}+\dot{f}^{k\ell}g_{k\ell}=&f'(\dot{K}^{k\ell}h_k^rh_{r\ell}+\dot{K}^{k\ell}g_{k\ell})\nonumber\\
=&f'(HK+\sigma_{n-1}(\kappa)).\label{eq-dotf2}
\end{align}
Substituting \eqref{eq-dotf1} - \eqref{eq-dotf2} into \eqref{eq-h_i^j} gives \eqref{ev-Wm}.

To derive equation \eqref{ev-bmn}, we calculate using $b^n_mh_n^k=\delta_m^k$ that
\begin{align}
	\partial_t{b_m^n}=&-b_m^i b_j^n\partial_t{h_i^j}\notag\\
	=&-f'\dot{K}^{k\ell}b_m^i b_j^n\nabla_k\nabla_{\ell}{h_i^j}\nonumber\\
&-(f''\dot{K}^{k\ell}\dot{K}^{pq}+f'\ddot{K}^{k\ell,pq})b_m^i b_j^n\nabla_i{h_{k\ell}}\nabla^j{h_{pq}}\notag\\
	&-f'(HK+\sigma_{n-1})b_m^n+nf'K(b_m^i b_i^n+\delta_m^n)\nonumber\\
&-(f-\phi(t))(\delta_m^n-b_m^i b_i^n).\label{eq-b}
\end{align}
Note that
\begin{align}
	\nabla_{\ell}{b_m^n}&=-b_i^n b_m^s\nabla_{\ell}{h_s^i},\label{nablabh}\\
	\nabla_k\nabla_{\ell}b_m^n&=-b_m^i b_j^n \nabla_k\nabla_{\ell}{h_i^j}+b_i^n b_m^p b_q^s\nabla_k{h_p^q}\nabla_{\ell}{h_s^i}\nonumber\\
&\qquad +b_i^n b_m^p b_q^s\nabla_k{h_s^i}\nabla_{\ell}{h_p^q},\label{nnbh}\\
	\dot{K}^{k\ell}&=Kb^{k\ell},\label{nabla-K}\\
	\ddot{K}^{k\ell,pq}&=Kb^{k\ell}b^{pq}-Kb^{kp}b^{q\ell}.\label{K-kl,pq}
\end{align}
Combining \eqref{eq-b}-\eqref{K-kl,pq} gives \eqref{ev-bmn}.
\end{proof}

On the time interval when $M_t$ is star-shaped with respect to some point $p_0$, the radial function $\rho$ evolves by 
\begin{equation}\label{ev-rho}
	\frac{\partial}{\partial t}\rho=\left(\phi(t)-f(K)\right)\sqrt{1+\frac{|\bar{\nabla}\rho|^2}{\sinh^2{\rho}}},
\end{equation}
where $K$ is expressed in \eqref{eq-Gauss} as a function of $\bar{\nabla}^2\rho, \bar{\nabla}\rho$ and $\rho$. The support function $u(x,t)=\langle{\sinh \rho_{p_0}(x)\partial_{\rho_{p_0}},\nu}\rangle$ of $M_t$ with respect to $p_0$  evolves by (see Lemma 4.3 in \cite{BenWei})
\begin{equation}\label{equeq}
	\frac{\partial}{\partial t}{u}=f'\dot{K}^{ij}\nabla_i\nabla_j{u}+\cosh{\rho_{p_0}}(x)\Big(\phi(t)-f-nKf'\Big)+f'KHu.
\end{equation}

The following lemma will be used to prove the lower bound of the principal curvatures.
\begin{lem}
  We have
  \begin{equation}\label{s2.2drho}
    \nabla_i\rho=\langle \partial_\rho,e_i\rangle,\qquad \nabla_j\nabla_i\rho=\coth\rho\left(g_{ij}-\nabla_i\rho\nabla_j\rho\right)-\frac{uh_{ij}}{\sinh\rho}.
  \end{equation}
\end{lem}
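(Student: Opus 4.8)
The plan is to compute both identities by viewing $\rho = \rho_{p_0}$ as the intrinsic distance function from the fixed point $p_0 \in \Omega_t$, restricted to $M_t$, and using the Hessian comparison formula which is exact in the constant-curvature space $\mathbb{H}^{n+1}$. First I would recall the ambient gradient: in the warped-product coordinates $(\rho,\theta)$ with $g_{\mathbb{H}^{n+1}} = d\rho^2 + \sinh^2\rho\, g_{\mathbb{S}^n}$ one has $D\rho = \partial_\rho$ (the unit radial field), so the tangential gradient of $\rho|_{M_t}$ is the projection of $\partial_\rho$ onto $T M_t$, namely $\nabla_i \rho = \langle \partial_\rho, e_i\rangle$ for a tangent frame $\{e_i\}$. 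This gives the first formula immediately.

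For the Hessian, the key input is the exact ambient Hessian of the distance function in hyperbolic space: $D^2 \rho = \coth\rho\,(g_{\mathbb{H}^{n+1}} - d\rho \otimes d\rho)$, equivalently $D_X D_Y \rho = \coth\rho\,(\langle X,Y\rangle - \langle X,\partial_\rho\rangle\langle Y,\partial_\rho\rangle)$ for any ambient vectors $X,Y$; this is the statement that the level sets of $\rho$ are geodesic spheres with principal curvature $\coth\rho$. Then I would relate the intrinsic Hessian of $\rho|_{M_t}$ to the ambient one via the Gauss formula: for tangent vectors $e_i, e_j$,
\begin{equation*}
\nabla_j \nabla_i \rho = D_{e_i} D_{e_j}\rho - \langle D_{e_i} e_j, \nu\rangle\,\langle D\rho,\nu\rangle = D^2\rho(e_i,e_j) - h_{ij}\,\langle \partial_\rho,\nu\rangle.
\end{equation*}
Substituting the ambient Hessian formula and using $\langle e_i,e_j\rangle = g_{ij}$, $\langle e_i,\partial_\rho\rangle = \nabla_i\rho$, together with the definition $u = \langle V,\nu\rangle = \langle \sinh\rho\,\partial_\rho,\nu\rangle$, so that $\langle \partial_\rho,\nu\rangle = u/\sinh\rho$, yields exactly
\begin{equation*}
\nabla_j\nabla_i\rho = \coth\rho\,(g_{ij} - \nabla_i\rho\,\nabla_j\rho) - \frac{u h_{ij}}{\sinh\rho}.
\end{equation*}

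I do not expect any serious obstacle here; the lemma is a standard computation. The only point requiring a little care is the sign and normalization of the second fundamental form in the Gauss formula relative to the convention fixed by \eqref{ex-h} (the unit outward normal $\nu$ and $h_{ij} = \langle D_{e_i} e_j, \nu\rangle$ with the sign that makes geodesic spheres have positive principal curvatures), and the identification $\langle\partial_\rho,\nu\rangle = u/\sinh\rho$ coming from \eqref{ex-u}. An alternative, purely computational route is to differentiate \eqref{ex-g}–\eqref{ex-u} directly in the $\mathbb{S}^n$ coordinates, expressing $\nabla^2\rho$ in terms of $\bar\nabla^2\rho$, $\bar\nabla\rho$ and the Christoffel symbols of $g_{ij}$, and then simplifying using \eqref{ex-h}; this is more tedious but avoids invoking the Hessian comparison identity, and I would include it as a check if the frame-based argument felt too terse for the reader.
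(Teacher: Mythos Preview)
Your proposal is correct and reaches the same conclusion as the paper, though by a slightly different packaging of the same geometric fact. The paper differentiates $\cosh\rho$ rather than $\rho$ directly: from the conformal Killing identity $D(\sinh\rho\,\partial_\rho)=\cosh\rho\,g_{\mathbb{H}^{n+1}}$ one gets $\nabla_i\cosh\rho=\langle\sinh\rho\,\partial_\rho,e_i\rangle$ and $\nabla_j\nabla_i\cosh\rho=\cosh\rho\,g_{ij}-u\,h_{ij}$, and then the chain rule $\nabla_i\cosh\rho=\sinh\rho\,\nabla_i\rho$, $\nabla_j\nabla_i\cosh\rho=\sinh\rho\,\nabla_j\nabla_i\rho+\cosh\rho\,\nabla_i\rho\nabla_j\rho$ yields \eqref{s2.2drho}. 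Your route instead inputs the ambient Hessian $D^2\rho=\coth\rho\,(g_{\mathbb{H}^{n+1}}-d\rho\otimes d\rho)$ directly and projects via the Gauss formula; since $D^2(\cosh\rho)=\cosh\rho\,g_{\mathbb{H}^{n+1}}$ is equivalent to that ambient Hessian by the chain rule, the two arguments are essentially the same computation read in two orders.

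One small point to clean up: in your displayed Gauss-formula step you have a sign slip in the intermediate expression and in the stated convention. With the outward normal and the paper's convention (so that spheres have $\kappa_i=\coth\rho>0$) one has $h_{ij}=\langle D_{e_i}\nu,e_j\rangle=-\langle D_{e_i}e_j,\nu\rangle$, and the correct decomposition is $\nabla_j\nabla_i\rho=D^2\rho(e_i,e_j)+\langle D_{e_i}e_j,\nu\rangle\langle D\rho,\nu\rangle=D^2\rho(e_i,e_j)-h_{ij}\,\langle\partial_\rho,\nu\rangle$. Your final formula is right; just make the two signs consistent when you write it up.
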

\proof
Since $V=\sinh\rho\partial_\rho$ is a conformal Killing vector field, i.e.,
 \begin{equation}\label{s2.conf}
   \langle D_X(\sinh \rho\partial_\rho), Y\rangle=\cosh \rho \langle  X,Y\rangle
 \end{equation}
for any tangential vector fields $X,Y$ in $\mathbb{H}^{n+1}$, we have   (see, e.g., \cite[\S 2]{GL15})
\begin{align*}
  \nabla_i\cosh\rho= & \langle\sinh\rho\partial_\rho,e_i\rangle,  \\
  \nabla_j  \nabla_i\cosh\rho=& \cosh\rho g_{ij}-uh_{ij}.
\end{align*}
Observing that
\begin{align*}
  \nabla_i\cosh\rho=&\sinh\rho\nabla_i\rho,\\
  \nabla_j  \nabla_i\cosh\rho=&\sinh\rho\nabla_j\nabla_i\rho+\cosh\rho\nabla_j\rho\nabla_i\rho,
\end{align*}
we conclude the equation \eqref{s2.2drho}.
\endproof

\subsection{Quermassintegrals}
Let $\mathcal{K}(\mathbb{H}^{n+1})$ be the set of compact convex sets in $\mathbb{H}^{n+1}$ with nonempty interior.  For any $\Omega\in \mathcal{K}(\mathbb{H}^{n+1})$ , the quermassintegrals of $\Omega$ are defined as follows (see \cite[Definition 2.1]{Sol05} \footnote{Note that the definition for $\mathcal{A}_k$ given here is the same as that for $W_{k+1}$ given in \cite{Sol05} up to a constant. In fact, we have $\mathcal{A}_k=(n+1)\binom{n}{k}W_{k+1}$.}):
\begin{equation}\label{Wk}
	\mathcal{A}_k(\Omega)=(n-k)\binom{n}{k}\frac{\omega_k\cdots\omega_0}{\omega_{n-1}\cdots\omega_{n-k-1}}\int_{\mathcal{L}_{k+1}}{\chi(L_{k+1}\cap\Omega)dL_{k+1}}
\end{equation}
for $k=0,1,\dots,n-1$, where $\omega_k=|\mathbb{S}^k|$ denotes the area of $k$-dimensional unit sphere,  $\mathcal{L}_{k+1}$ is the space of $(k+1)$-dimensional totally geodesic subspaces $L_{k+1}$ in $\mathbb{H}^{n+1}$ and $\binom{n}{k}=\frac{n!}{k!(n-k)!}$. The function $\chi$ is defined to be 1 if $L_{k+1}\cap\Omega\neq\emptyset$ and to be 0 otherwise. In particular, $\mathcal{A}_0(\Omega)=|\partial\Omega|$. Furthermore, we set
\begin{equation*}
\mathcal{A}_{-1}(\Omega)=|\Omega|,\qquad \mathcal{A}_{n}(\Omega)=\frac{\omega_{n}}{n+1}.
\end{equation*}

If the boundary $M=\partial\Omega$ is smooth (or at least of class $C^2$), we can define the principal curvatures $\kappa=(\kappa_1,\dots,\kappa_n)$ as the eigenvalues of the Weingarten matrix $\mathcal{W}$ of $M$. For each $k\in\{1,\dots,n\}$, the $k$th mean curvature $\sigma_k$ of $M$ is then defined as the $k$th elementary symmetric function of the principal curvatures of $M$:
\begin{equation*}
	\sigma_k=\sum_{1\leq i_1<\cdots<i_k\leq n}{\kappa_{i_1}\cdots\kappa_{i_k}}.
\end{equation*}
These include the mean curvature $H=\sigma_1$ and Gauss curvature $K=\sigma_n$ as special cases.
In the smooth case, the quermassintegrals and the curvature integrals of a smooth convex domain $\Omega$ in $\mathbb{H}^{n+1}$ are related as follows:
\begin{align}%
	\mathcal{A}_1(\Omega)=&\int_{\partial\Omega}{\sigma_1}d\mu-n\mathcal{A}_{-1}(\Omega),\label{eq-V1}\\
	\mathcal{A}_k(\Omega)=&\int_{\partial\Omega}{\sigma_k}d\mu-\frac{n-k+1}{k-1}\mathcal{A}_{k-2}(\Omega),\quad k=2,\cdots,n \label{eq-VW}.
\end{align}
The quermassintegrals for smooth domains satisfy a nice variational property (see \cite{BA97}):
\begin{equation}\label{eqWk}
  \frac{d}{dt}\mathcal{A}_k(\Omega_t)=(k+1)\int_{M_t}\eta \sigma_{k+1}d\mu_t,\quad k=0,\cdots,n-1
\end{equation}
along any normal variation with velocity $\eta$.

The quermassintegrals defined by \eqref{Wk} are monotone with respect to inclusion of convex sets. That is, if $E,F\in \mathcal{K}(\mathbb{H}^{n+1})$ satisfy $E\subset F$, we have
\begin{equation}\label{s2.Akmo}
  \mathcal{A}_k(E)\leq \mathcal{A}_k(F)
\end{equation}
for all $k=0,1,\cdots,n$. Moreover, they are continuous with respect to the Hausdorff distance. Recall that the Hausdorff distance between two convex sets $\Omega, L\in \mathcal{K}(\mathbb{H}^{n+1})$ is defined as
\begin{equation*}
	\mathrm{d}_{\mathcal{H}}(\Omega,L):=\mathrm{inf}\{\lambda>0:\Omega\subset B_{\lambda}(L)\,  \text{and}\, L\subset B_{\lambda}(\Omega)\},
\end{equation*}
where $	B_{\lambda}(L):=\{q\in \mathbb{H}^{n+1}|~\mathrm{d}_{\mathbb{H}^{n+1}}(q,L)<\lambda\}$.

\section{A priori estimates}
In this section, we first show the $C^0$ and $C^1$ estimates of the solution $M_t$ to the flow \eqref{flow-VMCF}, which can be proved using a similar argument as in \cite{BenChenWei,BenWei}.  Then we derive a positive lower bound on the principal curvatures $\kappa_i$ of the solution $M_t$, and prove the upper bound of the Gauss curvature $K$.

\subsection{$C^0$ and $C^1$ estimates}\label{sec3}
Let $M_t$ be a smooth convex solution to the flow \eqref{flow-VMCF} on a maximal existence time interval $[0,T)$. Denote $\Omega_t$ the domain enclosed by $M_t$. As the velocity of the flow \eqref{flow-VMCF} only depends on the curvature and is invariant under reflection with respect to a totally geodesic hyperplane, we can argue as in \cite[\S 4]{BenChenWei} using the Alexandrov reflection method to show that the inner radius and outer radius of $\Omega_t$ are uniformly bounded:
\begin{lem}\label{Lemma-ioradius}
	Let $M_t$ be a smooth convex solution to the flow \eqref{flow-VMCF} on the time interval $t\in [0,T)$. Denote $\rho_-(t)$, $\rho_+(t)$ be the inner radius and outer radius of $\Omega_t$. Then there exist positive constants $c_1,\ c_2$ depending only on $n$ and $M_0$ such that
	\begin{equation}\label{in-out-radius}
	0<c_1\leq\rho_-(t)\leq\rho_+(t)\leq c_2
	\end{equation}
	for all time $t\in[0,T)$.
\end{lem}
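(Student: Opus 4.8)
The plan is to obtain the upper bound for the outer radius from a monotone quermassintegral, and the lower bound for the inner radius from the preserved volume; I also outline the Alexandrov reflection argument of \cite[\S4]{BenChenWei}, which is the route signposted in the text, since the speed $\phi(t)-f(K)$ of \eqref{flow-VMCF} is invariant under reflection across a totally geodesic hyperplane.

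For the outer radius, I would first check that $\mathcal{A}_{n-1}(\Omega_t)$ is non-increasing. By the variational formula \eqref{eqWk} with $k=n-1$ applied to the normal velocity $\eta=\phi(t)-f(K)$,
\[
\frac{d}{dt}\mathcal{A}_{n-1}(\Omega_t)=n\int_{M_t}\big(\phi(t)-f(K)\big)K\,d\mu_t .
\]
Since $f$ is increasing, the functions $x\mapsto f(K(x))$ and $x\mapsto K(x)$ are similarly ordered on $M_t$, so Chebyshev's integral inequality yields $\int_{M_t}f(K)K\,d\mu_t\ge\frac{1}{|M_t|}\big(\int_{M_t}f(K)\,d\mu_t\big)\big(\int_{M_t}K\,d\mu_t\big)=\phi(t)\int_{M_t}K\,d\mu_t$, whence the right-hand side above is $\le 0$ and $\mathcal{A}_{n-1}(\Omega_t)\le\mathcal{A}_{n-1}(\Omega_0)$ for all $t$. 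On the other hand, by the integral-geometric definition \eqref{Wk}, $\mathcal{A}_{n-1}(\Omega)$ is a dimensional constant times the measure of totally geodesic hyperplanes meeting $\Omega$; by the inclusion-monotonicity \eqref{s2.Akmo} this dominates the corresponding measure for any geodesic segment contained in $\Omega$, which by the Crofton formula is proportional to the length of that segment. Choosing a segment realizing $\operatorname{diam}(\Omega_t)$ gives $\operatorname{diam}(\Omega_t)\le C(n)\,\mathcal{A}_{n-1}(\Omega_0)$ and hence $\rho_+(t)\le\operatorname{diam}(\Omega_t)\le c_2=c_2(n,M_0)$.

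For the inner radius, I would use that \eqref{flow-VMCF} preserves the enclosed volume, so $|\Omega_t|=|\Omega_0|=:V_0>0$ for all $t$. If $\rho_-(t)$ were small then, by standard convex geometry (a Steinhagen- or John-type estimate, on the bounded region containing $\Omega_t$), $\Omega_t$ would be squeezed into a slab of width at most $C(n)\rho_-(t)$, and intersecting with a ball of radius $2c_2$ containing $\Omega_t$ would force $V_0=|\Omega_t|\le C(n,c_2)\rho_-(t)$, which is impossible once $\rho_-(t)<V_0/C(n,c_2)$. Hence $\rho_-(t)\ge c_1:=V_0/C(n,c_2)>0$, and together with $\rho_-(t)\le\rho_+(t)\le c_2$ this is \eqref{in-out-radius}. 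Along the reflection route one instead fixes a totally geodesic hyperplane $P$ with closed half-spaces $P^\pm$ and reflection $R_P$, notes that $R_P(M_t)$ is again a solution of \eqref{flow-VMCF}, and proves that the inclusion $R_P(\overline{\Omega_0}\cap P^+)\subseteq\overline{\Omega_0}\cap P^-$ persists along the flow; letting $P$ vary over all positions and directions then confines $\Omega_t$ to a fixed bounded region determined by $M_0$, which again bounds $\rho_+(t)$ and keeps $\Omega_t$ from drifting off to infinity.

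I expect the main obstacle to be precisely this last preservation statement: along $M_t\cap P$ the hypersurface $M_t$ and its reflection $R_P(M_t)$ meet tangentially, so the plain avoidance principle does not apply there, and one must combine the comparison principle for \eqref{flow-VMCF} with the reflection symmetry and the Hopf boundary lemma for the linearized operator $f'\dot{K}^{ij}\nabla_i\nabla_j$, exactly as in \cite[\S4]{BenChenWei} and \cite{Cab-Miq2007}. By contrast, the convex-geometry ingredients above (Chebyshev, Crofton, and the Steinhagen/John estimate) are routine and only enter through their form on a fixed bounded region of $\mathbb{H}^{n+1}$.
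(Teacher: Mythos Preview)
Your argument is correct, but it takes a different route from the paper. The paper does not prove Lemma~\ref{Lemma-ioradius} directly at all: it simply invokes the Alexandrov reflection argument of \cite[\S4]{BenChenWei}, using only that the speed $\phi(t)-f(K)$ is curvature--determined and invariant under reflection across totally geodesic hyperplanes. Your proof instead anticipates Lemma~\ref{lemmono} (the monotonicity of $\mathcal{A}_{n-1}(\Omega_t)$, which you reprove via Chebyshev), feeds it into the Cauchy--Crofton identity in $\mathbb{H}^{n+1}$ to bound the diameter and hence $\rho_+(t)$, and then extracts the inner--radius bound from volume preservation together with a width/inradius estimate. This is logically independent of the reflection machinery and arguably more elementary for the bare statement \eqref{in-out-radius}; it also makes transparent that the outer--radius bound uses only convexity and the monotone quantity, with no appeal to a comparison/Hopf argument along $M_t\cap P$. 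What the paper's reflection route buys, by contrast, is more than \eqref{in-out-radius}: it simultaneously controls the \emph{position} of $\Omega_t$ (the quantities $S^\pm_{\gamma_z}(\Omega_t)$ are monotone), which is exactly what is reused in \S\ref{sec.hau} to show that the inball center $p_t$ converges. Your quermassintegral argument bounds $\rho_+(t)$ but does not by itself prevent drift, so the reflection argument would still be needed later.

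Two minor points worth tightening. First, the Steinhagen/John step is stated for ``the bounded region containing $\Omega_t$'', but that region is not fixed (only its radius $c_2$ is); you should note that by the homogeneity of $\mathbb{H}^{n+1}$ one may recentre, or equivalently pass to the Beltrami--Klein model on a ball of hyperbolic radius $c_2$, where convexity is preserved and the Euclidean Steinhagen inequality applies with constants depending only on $n$ and $c_2$. Second, your Crofton step is correct as written (a hyperplane meets a geodesic segment in at most one point, so $\chi$ and the intersection count agree a.e.), but it is worth saying explicitly that this uses the definition \eqref{Wk} of $\mathcal{A}_{n-1}$ with $k=n-1$, so that the relevant family is $\mathcal{L}_n$, the totally geodesic hyperplanes.
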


By \eqref{in-out-radius}, the inner radius of $\Omega_t$ is bounded below by a positive constant $c_1$. This implies that there exists a geodesic ball of radius $c_1$ contained in $\Omega_t$ for each $t\in[0,T)$. The following lemma says that there exists a geodesic ball with fixed center enclosed by the flow hypersurface on a suitable fixed time interval. For the proof, we adapt a similar argument as in \cite[Lemma 4.2]{BenWei}. But for our nonhomogeneous case, we need the positivity and monotonicity of $f$ as in Assumption \ref{ass}.
\begin{lem}\label{lem3.2}
	Let $M_t$ be a smooth convex solution to the flow \eqref{flow-VMCF} on the time interval $[0,T)$. For any $t_0\in[0,T)$, let $B_{\rho_0}(p_0)$ be the inball of $\Omega_{t_0}$, where $\rho_0=\rho_-(t_0)$. Then
	\begin{equation}\label{inball-t0}
	B_{\rho_0/2}(p_0)\subset\Omega_t,\ \ t\in[t_0,\min\{T,t_0+\tau\})
	\end{equation}
	for some $\tau$ depending only on $n$ and $M_0$.
\end{lem}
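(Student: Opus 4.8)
The plan is to work with the radial graph representation of $M_t$ over $\mathbb{S}^n$ with center at the \emph{fixed} point $p_0$. Let $\rho(\cdot,t)\colon\mathbb{S}^n\to\mathbb{R}_+$ be the radial function of $M_t$ about $p_0$ and set $\rho_{\min}(t):=\min_{\mathbb{S}^n}\rho(\cdot,t)$. Since $M_t$ is convex it is star-shaped with respect to every interior point, so both this representation and the evolution equation \eqref{ev-rho} are available as long as $p_0\in\Omega_t$, and $B_r(p_0)\subset\Omega_t$ holds precisely when $\rho_{\min}(t)\ge r$. By the choice of $p_0$ and Lemma~\ref{Lemma-ioradius}, $\rho_{\min}(t_0)=\rho_0=\rho_-(t_0)\ge c_1>0$. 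Thus \eqref{inball-t0} is equivalent to $\rho_{\min}(t)\ge\rho_0/2$ on $[t_0,\min\{T,t_0+\tau\})$, which I would prove by a continuity (bootstrap) argument: let $t^\ast$ be the supremum of the times $t$ for which $\rho_{\min}\ge\rho_0/2$ on $[t_0,t]$, and show $\rho_{\min}(t)>\rho_0/2$ for all $t<\min\{t^\ast,t_0+\tau\}$, which forces $t^\ast\ge\min\{T,t_0+\tau\}$.

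The core of the argument is a one-sided bound on the normal speed at the point of $M_t$ closest to $p_0$. Fix $t<t^\ast$ and let $\theta_t\in\mathbb{S}^n$ realize $\rho_{\min}(t)$; write $r:=\rho_{\min}(t)\ge\rho_0/2$. At $\theta_t$ one has $\bar\nabla\rho=0$ and $\bar\nabla^2\rho\ge0$, so from \eqref{ex-g}--\eqref{ex-h} (equivalently \eqref{eq-Gauss}) evaluated there, $g_{ij}=\sinh^2 r\,\sigma_{ij}$ and $0<h_{ij}=\tfrac{1}{\sinh r}\bigl(-\sinh r\,\bar\nabla_i\bar\nabla_j\rho+\sinh^2 r\cosh r\,\sigma_{ij}\bigr)\le\coth r\,g_{ij}$; hence every principal curvature at $\theta_t$ is at most $\coth r$ and $K(\theta_t,t)\le\coth^n r\le\coth^n(\rho_0/2)\le\coth^n(c_1/2)$. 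Geometrically this is just the statement that at its closest point to $p_0$ the hypersurface is no more curved than the geodesic sphere $\partial B_r(p_0)$ through that point. By the monotonicity $f'>0$ in Assumption~\ref{ass}\,\eqref{p1} we get $f(K(\theta_t,t))\le f(\coth^n(c_1/2))=:C$, a constant depending only on $n$ and $M_0$ (recall $c_1=c_1(n,M_0)$ and $f$ is fixed). On the other hand $K>0$ on the convex hypersurface $M_t$, so $f(K)>0$ by the positivity in Assumption~\ref{ass}\,\eqref{p1}, and therefore $\phi(t)>0$. Since the gradient factor in \eqref{ev-rho} equals $1$ at $\theta_t$, we conclude $\partial_t\rho(\theta_t,t)=\phi(t)-f(K(\theta_t,t))\ge-C$.

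To finish, note that on $[t_0,t^\ast)$ the function $\rho_{\min}(t)$ is a minimum over the compact manifold $\mathbb{S}^n$ of the functions $\rho(\theta,\cdot)$, which are smooth there, hence $\rho_{\min}$ is locally Lipschitz; by Hamilton's trick $\tfrac{d}{dt}\rho_{\min}(t)\ge\partial_t\rho(\theta_t,t)\ge-C$ for a.e.\ $t\in[t_0,t^\ast)$, so integrating gives $\rho_{\min}(t)\ge\rho_0-C(t-t_0)$ on $[t_0,t^\ast)$. Choosing $\tau:=c_1/(2C)=c_1/\bigl(2f(\coth^n(c_1/2))\bigr)$, which depends only on $n$ and $M_0$, yields $\rho_{\min}(t)>\rho_0-C\tau=\rho_0-c_1/2\ge\rho_0/2$ for every $t\in[t_0,\min\{t^\ast,t_0+\tau\})$ (using $\rho_0\ge c_1$); the continuity argument then gives $t^\ast\ge\min\{T,t_0+\tau\}$ and hence $B_{\rho_0/2}(p_0)\subset\Omega_t$ for $t\in[t_0,\min\{T,t_0+\tau\})$. (Equivalently, one may phrase the same estimate as an avoidance argument for $M_t$ against the shrinking geodesic spheres $\partial B_{\rho_0-C(t-t_0)}(p_0)$.)

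The only genuinely nontrivial ingredient is the pointwise bound $K\le\coth^n\rho_{\min}$ at the closest point: it converts the a priori uncontrolled speed $f(K)$ into a bound depending only on the fixed positive inradius lower bound $c_1$, and this is what makes the elementary comparison/continuity scheme close. This is exactly where the positivity and monotonicity of $f$ enter — as in \cite[Lemma~4.2]{BenWei} — and it is why conditions \eqref{p3} and \eqref{p4} of Assumption~\ref{ass} play no role in this lemma.
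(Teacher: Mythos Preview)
Your proof is correct and follows the same approach as the paper's: bound the principal curvatures at the point of $M_t$ closest to $p_0$ by $\coth\rho_{\min}(t)$, drop the global term using $\phi(t)>0$, and obtain a differential inequality for $\rho_{\min}$. The only (cosmetic) difference is that the paper compares $\rho_{\min}$ with the solution of the ODE $\bar\rho'=-f(\coth^n\bar\rho)$ and sets $\tau=\int_{\rho_0/2}^{\rho_0}ds/f(\coth^n s)$, whereas you freeze the speed at its worst value $f(\coth^n(c_1/2))$ on the bootstrap interval to get a linear lower barrier and the explicit $\tau=c_1/\bigl(2f(\coth^n(c_1/2))\bigr)$.
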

\proof
Given $p_0$, we denote by $\rho_{p_0}$ the distance function to $p_0$ in $\mathbb{H}^{n+1}$ and by $\partial_\rho=\partial_{\rho_{p_0}}$ the gradient vector of $\rho_{p_0}$. For any $x\in M_t$,
\begin{align}\label{s3.inball-1}
  \frac{\partial}{\partial t} \sinh^2\rho_{p_0}(x)=& 2\langle \sinh \rho_{p_0}(x)\partial_\rho, \frac{\partial}{\partial t}(\sinh \rho_{p_0}(x)\partial_\rho) \rangle\nonumber\\
  = &2 \sinh \rho_{p_0}(x)\cosh \rho_{p_0}(x)\Big(\phi(t)-f(K(x,t))\Big)\langle \partial_\rho,\nu\rangle,
\end{align}
where we used the conformal property \eqref{s2.conf} of  $\sinh \rho\partial \rho$. It follows from \eqref{s3.inball-1} that
 \begin{align*}
  \frac{\partial}{\partial t} \rho_{p_0}(x)=& \Big(\phi(t)-f(K(x,t))\Big)\langle \partial \rho,\nu\rangle\\
  \geq&~-f(K(x,t))\langle \partial_\rho,\nu\rangle,
\end{align*}
 since $\phi(t)>0$ and $\langle \partial_\rho,\nu\rangle>0$ on $M_t$.

 Denote $\rho(t)=\min_{M_t}\rho_{p_0}(x)$. At the minimum point, we have $\langle \partial_\rho,\nu\rangle=1$ and $\kappa_i\leq \coth \rho(t)$. Since $f$ is strictly increasing, we have
 \begin{equation*}
   f(K)\leq f(\coth^n\rho(t))
 \end{equation*}
 at the minimum point and so
\begin{equation}\label{s3.inball-3}
  \frac d{dt}\rho(t)\geq -f(\coth^n\rho(t)).
\end{equation}

Let $\bar{\rho}(t)$ be the solution of the ODE
\begin{equation}\label{s3.ODE}
  \left\{\begin{aligned}
  \frac{d}{dt}\bar{\rho}(t)=&-f(\coth^n\bar{\rho}(t)),\\
  \bar{\rho}(t_0)=&\rho_0.
  \end{aligned}\right.
\end{equation}
Denote $\tau$ as the time such that $\bar{\rho}(t_0+\tau)=\rho_0/2$. Since $f$ is positive,  $\bar{\rho}(t)$ is strictly decreasing. It follows that the inverse $t=t(\bar{\rho})$ of the function $\bar{\rho}(t)$ is well-defined and satisfies
\begin{equation*}
  \frac{dt}{d\bar{\rho}}=-\frac{1}{f(\coth^n\bar{\rho})}.
\end{equation*}
Integrating this equation gives
\begin{align*}
  \tau= & \int_{t_0}^{t_0+\tau}dt \\
  =& -\int_{\rho_0}^{\rho_0/2}\frac{d\bar{\rho}}{f(\coth^n\bar{\rho})}\\
  =&\int_{\rho_0/2}^{\rho_0}\frac{ds}{f(\coth^ns)},
\end{align*}
which depends only on the bounds of $\rho_0$  and not on $t_0$.

Since $\rho(t_0)=\rho_0$, by comparing \eqref{s3.inball-3} and the ODE \eqref{s3.ODE}, we conclude that
\begin{equation*}
  \rho(t)\geq \frac{\rho_0}{2},\quad \forall~t\in [t_0,\min\{t_0+\tau,T\}).
\end{equation*}
This means that $B_{\rho_0/2}(p_0)\subset \Omega_t$ for all $t\in [t_0,\min\{t_0+\tau,T\})$.
\endproof

Let $M_t$ be a smooth convex solution to the flow \eqref{flow-VMCF} on the time interval $[0,T)$.  For any $t_0\in[0,T)$, let $B_{\rho_0}(p_0)$ be the inball of $\Omega_{t_0}$, where $\rho_0=\rho_-(t_0)$. Consider the support function $u(x,t)=\sinh \rho_{p_0}(x)\langle\partial \rho_{p_0},\nu\rangle$ of $M_t$ with respect to the point $p_0$, where $\rho_{p_0}$ is the distance function in $\mathbb{H}^{n+1}$ from the point $p_0$. Since $M_t$ is convex, by \eqref{in-out-radius} and \eqref{inball-t0}, we see
\begin{equation}\label{u-bound}
	\begin{split}
u(x,t)&\geq\sinh\left(\frac{\rho_0}{2}\right)\geq\sinh\left(\frac{c_1}{2}\right)=:2c,\\
u(x,t)&\leq\sinh(2c_2)
    \end{split}
\end{equation}
and
\begin{equation}\label{rhobound}
0<\frac{c_1}{2}\leq \rho_{p_0}(t)\leq 2c_2<\infty
\end{equation}
for any $t\in[t_0,\min\{T,t_0+\tau\})$. By \eqref{ex-u}, we have the $C^1$ estimate on $\rho$,
\begin{equation}\label{s3.C1}
  |\bar{\nabla}\rho|\leq \sqrt{\sinh^2\rho+|\bar{\nabla}\rho|^2}=\frac{\sinh^2\rho}{u}\leq\frac{\sinh^2(2c_2)}{\sinh\left({c_1}/{2}\right)}
\end{equation}
on the time interval $t\in[t_0,\min\{T,t_0+\tau\})$, where $|\bar{\nabla}\rho|$ is the norm of the gradient of $\rho$ with respect to the round metric on $\mathbb{S}^n$.

Moreover, we have
\begin{align}\label{s3.nablarho}
  1- |\nabla\rho|^2= & |D\rho|^2-|\nabla\rho|^2\nonumber\\
  =&\langle D\rho,\nu\rangle^2=\frac{u^2}{\sinh^2\rho}\nonumber\\
  \geq&\frac{\sinh^2(c_1/2)}{\sinh^2(2c_2)}~=:c_3
\end{align}
holds on $t\in[t_0,\min\{T,t_0+\tau\})$, where $D$ and $\nabla$ denote the Levi-Civita connection on $\mathbb{H}^{n+1}$ and on $M_t$ with respect to the induced metric. The estimate \eqref{s3.nablarho} will be used crucially in the proof of Proposition \ref{preserve convex}.

\subsection{Preserving convexity}\label{sec4}
In this subsection, we show that the solution $M_t$ of the flow \eqref{flow-VMCF} preserves the convexity. This follows from the following (time-dependent) lower bound on the principal curvatures of $M_t$.
\begin{prop}\label{preserve convex}
Let $M_0$ be a smooth, closed and convex hypersurface in $\mathbb{H}^{n+1}$, and $M_t,t\in[0,T)$ be the smooth solution of the flow \eqref{flow-VMCF} starting from $M_0$. If $T<\infty$, then there exist constants $\Lambda_1$ and $\Lambda_2$ depending only on $n, M_0$ and $\Theta$, such that the principal curvatures $\kappa_i$ of $M_t$ satisfy
	\begin{equation}\label{s4.2-0}
		\kappa_i\geq \Lambda_2^{-1}\mathrm{e}^{-\frac{2\Lambda_1}{\tau}t}
	\end{equation}
	for all $i=1,\dots,n$ and $t\in [0,T)$, where $\tau$ is the constant in Lemma \ref{lem3.2} and $\Theta$ is the constant in item \eqref{p3} of Assumption \ref{ass}.
\end{prop}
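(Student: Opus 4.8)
The plan is to apply the maximum principle to the auxiliary function $Q(\cdot,t)=\log\mathfrak{b}+A\rho$ introduced in \eqref{s1.Q}, where $\mathfrak{b}=\lambda_{\max}(b_m^n)$ is the largest eigenvalue of the inverse Weingarten matrix, $\rho=\rho_{p_0}$ is the radial function with respect to the fixed centre $p_0$ given by Lemma \ref{lem3.2}, and $A$ is a large constant to be chosen. Since the bounds \eqref{u-bound}--\eqref{s3.nablarho} are only available on a time interval of fixed length $\tau$ around each $t_0$, one first proves the estimate on $[t_0,\min\{T,t_0+\tau\})$ and then iterates over the intervals $[k\tau/2,(k+1)\tau/2)$; the exponential factor $\mathrm{e}^{-2\Lambda_1 t/\tau}$ in \eqref{s4.2-0} is exactly what accumulates from this iteration, since each step at worst multiplies the curvature lower bound by a fixed constant $\mathrm{e}^{-\Lambda_1}$. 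So the heart of the matter is: on a single interval $[t_0,t_0+\tau)$, show $\max_{M_t}Q\le\max_{M_{t_0}}Q+\Lambda_1$ for a constant $\Lambda_1$ depending only on $n,M_0,\Theta$.

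To carry this out I would first record the evolution equation for $\mathfrak{b}$. At a point where the maximal eigenvalue of $b_m^n$ is simple one can differentiate through, and using \eqref{ev-bmn} contracted with the corresponding eigenvector pair one gets
\begin{equation*}
\partial_t\mathfrak{b}-f'Kb^{k\ell}\nabla_k\nabla_\ell\mathfrak{b}\le (\text{gradient terms})-f'(HK+\sigma_{n-1})\mathfrak{b}+nf'K(\mathfrak{b}^2\kappa_{\min}+1)-(f-\phi(t))(1-\mathfrak{b}\kappa_{\min}),
\end{equation*}
the precise bad term being the positive quantity $nf'K\mathfrak{b}^2\kappa_{\min}=nf'K\mathfrak{b}$ (since $\mathfrak{b}$ is the reciprocal of the \emph{smallest} principal curvature, $\mathfrak{b}\kappa_{\min}=1$). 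Taking $\log$, the equation for $\log\mathfrak{b}$ loses the quadratic growth and instead has a good negative term $-\tfrac{1}{\mathfrak{b}}f'(HK+\sigma_{n-1})$ coming from the $H,\sigma_{n-1}$ contribution, against bounded or controllable terms. Here Assumption \ref{ass}\eqref{p3}, $f(K)\le\Theta f'(K)K$, is used to bound the global term: $\phi(t)=\fint f(K)\,\mathrm d\mu\le\Theta\,\fint f'(K)K\,\mathrm d\mu$, and together with \eqref{p1} and the $C^0$ bounds this keeps $|f-\phi(t)|$ comparable to $f'K\cdot(\text{bounded})$. Assumption \ref{ass}\eqref{p4}, $xf''(x)+f'(x)\ge0$, is what guarantees the second-derivative (Hessian) terms $f''K^2+f'K\ge0$ appearing in \eqref{ev-bmn} have a favourable sign, so the genuinely dangerous gradient terms are only the first group in \eqref{ev-bmn}; these are handled by a Cauchy--Schwarz argument against the diffusion, exactly as in the standard pinching estimates.

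For the $A\rho$ term I would use \eqref{ev-rho} and the Hessian formula \eqref{s2.2drho}: the drift of $\rho$ is $(\phi(t)-f(K))\sqrt{1+|\bar\nabla\rho|^2/\sinh^2\rho}$, which is bounded on $[t_0,t_0+\tau)$ by \eqref{s3.C1}, and the diffusion $f'Kb^{k\ell}\nabla_k\nabla_\ell\rho$ evaluates, via \eqref{s2.2drho}, to $f'K\coth\rho\,(b^{k\ell}g_{k\ell}-|\nabla\rho|_b^2)-\tfrac{u}{\sinh\rho}f'K\,\mathrm{tr}(b)$; the key point is that $\tfrac{u}{\sinh\rho}f'K\,b^{k\ell}h_{k\ell}=n\,\tfrac{u}{\sinh\rho}f'K$ produces a \emph{negative} term of size $\sim f'K$ with a definite coefficient bounded below using \eqref{u-bound}, \eqref{rhobound} and \eqref{s3.nablarho}. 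Choosing $A$ large enough (depending only on $n,M_0,\Theta$) makes $A$ times this negative $f'K$ term dominate the positive $\tfrac{1}{\mathfrak b}f'K$-type term coming from $\log\mathfrak b$ whenever $\mathfrak b$ is large; at a spatial maximum of $Q$ this forces $\partial_t Q\le C$ for $C=C(n,M_0,\Theta)$, hence $\max Q$ grows at most linearly, and over the interval of length $\tau$ it increases by at most $C\tau=:\Lambda_1$. This gives $\mathfrak b\le\Lambda_2\,\mathrm e^{2\Lambda_1 t/\tau}$, i.e. \eqref{s4.2-0}.

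I expect the main obstacle to be the careful bookkeeping of the gradient terms in \eqref{ev-bmn} — showing that the combination $-(f''K^2+f'K)b^{k\ell}b^{pq}+f'Kb^{kp}b^{q\ell}$, contracted appropriately and combined with the two "$bbb\nabla h\nabla h$" terms on the first line, can be absorbed into the diffusion $f'Kb^{k\ell}\nabla_k\nabla_\ell(\log\mathfrak b)$ after passing to $\log$, using that at the maximum $\nabla\log\mathfrak b=-A\nabla\rho$ so the first derivatives of $\mathfrak b$ are controlled. A secondary subtlety is handling the possible non-simplicity of the top eigenvalue of $b_m^n$, which is dealt with in the standard way by replacing $\mathfrak b$ with a smooth symmetric function approximating the maximal eigenvalue, or by working with the viscosity/second-derivative criterion for the maximum; this is routine and I would not dwell on it.
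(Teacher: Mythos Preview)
Your overall strategy --- the auxiliary function $Q=\log\mathfrak{b}+A\rho$, working on intervals of length $\tau$ furnished by Lemma~\ref{lem3.2}, and iterating --- is exactly the paper's. But two of the key steps are misidentified, and as written the argument would not close.

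\textbf{The good term from $A\rho$ has been misread.} When you compute $(\partial_t-f'Kb^{k\ell}\nabla_k\nabla_\ell)(A\rho)$ using \eqref{s2.2drho}, the term $-\tfrac{u}{\sinh\rho}f'K\,b^{k\ell}h_{k\ell}=-n\tfrac{u}{\sinh\rho}f'K$ sits inside the diffusion, so after subtracting the diffusion it enters the evolution inequality with a \emph{plus} sign, namely $+\,nA\tfrac{u}{\sinh\rho}f'K$. This is a bad term, not a good one. The genuinely useful negative contribution from $A\rho$ is
\[
-Af'\coth\rho\bigl(\sigma_{n-1}(\kappa)-Kb^{kk}(\nabla_k\rho)^2\bigr)\;\le\;-Af'\coth\rho\,\sigma_{n-1}(\kappa)\bigl(1-|\nabla\rho|^2\bigr)\;\le\;-c_3\,A\,f'\sigma_{n-1}(\kappa),
\]
where \eqref{s3.nablarho} supplies the lower bound $1-|\nabla\rho|^2\ge c_3>0$. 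Since $\sigma_{n-1}(\kappa)=K\sum_k b^{kk}\ge K\mathfrak{b}$, this produces $-c_3A\,f'K\,\mathfrak{b}$, which is the term that, for $A=(n+\Theta)/c_3$, dominates the positive $(n+\tfrac{f}{f'K})f'K\,\mathfrak{b}\le(n+\Theta)f'K\,\mathfrak{b}$ coming from the $b_1^1$ evolution. Your proposed ``negative $f'K$'' term is lower order and cannot absorb the $f'K\,\mathfrak{b}$ terms.

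\textbf{The global term $\phi(t)$ is not controlled the way you suggest.} Bounding $\phi(t)\le\Theta\fint f'(K)K$ does not render $|f-\phi(t)|$ comparable to the local quantity $f'(K)K$; $\phi$ is a number and $f'K$ varies over $M_t$. The paper instead isolates all $\phi(t)$-terms as
\[
Q_1=\phi(t)\Bigl(\tfrac{1}{\mathfrak{b}}-\mathfrak{b}+A\sqrt{1+|\bar\nabla\rho|^2/\sinh^2\rho}\,\Bigr),
\]
uses only $\phi(t)>0$, and observes that the bracket is negative once $\mathfrak{b}$ exceeds an explicit constant depending on $A$ and the $C^0/C^1$ bounds. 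No comparison of $\phi$ with $f'K$ is needed.

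\textbf{The conclusion mechanism is a contradiction, not a linear growth bound.} At the interior space--time maximum $(\bar p,\bar t)$ of $Q$, one shows that if $\mathfrak{b}(\bar p,\bar t)$ exceeds a constant $a_1=a_1(n,M_0,\Theta)$ then $Q_1<0$ and $Q_2<0$, contradicting $0\le(\partial_t-\mathcal{L})Q=Q_1+Q_2$. Hence $\mathfrak{b}(\bar p,\bar t)\le a_1$, and so $Q\le\log a_1+2Ac_2$ on the whole interval (or the bound at $t_0$ if the max is at $t_0$). The factor $\mathrm{e}^{\Lambda_1}$ per step comes from the oscillation of $\rho$ (range $[c_1/2,2c_2]$), not from a time-derivative bound $\partial_t Q\le C$.

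Your remarks on the gradient terms are essentially right: item~\eqref{p4} gives $f''K^2+f'K\ge0$, killing one block; the remaining second-derivative block combines with the extra term from $\log$ via Codazzi to give a nonpositive contribution without needing $\nabla Q=0$.
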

\begin{proof}
Since $M_0$ is convex, by continuity the solution $M_t$ is convex for at least a short time. Let $T_1<T$ be the largest time such that $M_t$ is convex for all $0\leq t<T_1$. If we can derive the estimate \eqref{s4.2-0} on the interval $[0,T_1)$, then it implies a contradiction with the maximality of $T_1$ and so that $M_t$ is convex on the whole time interval $[0,T)$, and the estimate \eqref{s4.2-0} holds for all $t\in [0,T)$. Therefore, without loss of generality, we can assume that $M_t$ is convex for $t\in [0,T)$ and  we need to show the estimate \eqref{s4.2-0} holds for all $t\in [0,T)$.
	
For any time $t_0\in[0,T)$, let $B_{\rho_0}(p_0)$ be the inball of $\Omega_{t_0}$, where $\rho_0=\rho_-(t_0)$. Denote
\begin{equation*}
  \mathfrak{b}_i:=\frac{1}{\kappa_i},\quad \text{and}\quad  \mathfrak{b}:=\max_{i=1,\cdots,n}{\mathfrak{b}_i}.
\end{equation*}
In order to prove the lower bound of the principal curvatures $\kappa_i$, it suffices to prove the upper bound of $\mathfrak{b}$.
	
	We consider the auxiliary function
	  \begin{equation}\label{eq-Q}
	  	Q(p,t):=\log{\mathfrak{b}}+A\rho,\qquad t\in [t_0,\min\{T,t_0+\tau\}),
	  \end{equation}
  where $A>0$ is a large constant to be determined and $\tau$ is the constant in Lemma \ref{lem3.2}. Suppose that the maximum of $Q$ on $M\times [t_0,\min\{T,t_0+\tau\})$ is attained at $(\bar{p},\bar{t})$. We choose a local orthonormal frame $e_1,\dots,e_n$ around $\bar{p}$ such that at $(\bar{p},\bar{t})$ we have
  \begin{equation*}
    g_{ij}=\delta_{ij},\qquad h_{ij}=\kappa_i\delta_{ij}.
  \end{equation*}
  By a rotation, we also assume that
  \begin{equation*}
    \mathfrak{b}(\bar{p},\bar{t})=\mathfrak{b}_1(\bar{p},\bar{t})=b^1_1(\bar{p},\bar{t}).
  \end{equation*}
  Let $\xi=(1,0\cdots,0)$ be a contravariant vector field and set
  \begin{equation*}
    \lambda=\dfrac{b^{ij}\xi_i\xi_j}{g^{ij}\xi_i\xi_j},
  \end{equation*}
  which is well-defined in a neighbourhood of $(\bar{p},\bar{t})$. Defining $\tilde{Q}(p,t)$ by replacing $\mathfrak{b}$ by $\lambda$ in \eqref{eq-Q}, we see that $\tilde{Q}$ attends its maximum at $(\bar{p},\bar{t})$. Moreover, at $(\bar{p},\bar{t})$ we have $\partial_t\lambda=\partial_tb_1^1$ and the spatial derivatives also coincide. That is, $\lambda$ satisfies the same evolution equation as $b_1^1$ at the point $(\bar{p},\bar{t})$. Therefore, for the sake of clarity (see a similar argument as in \cite[\S 4]{Ger14}), we can treat $b_1^1$ as a scalar function and pretend that $Q$ is defined by
  \begin{equation}\label{eq-Q2}
	  	Q(p,t)=\log{b_1^1}+A\rho,\qquad t\in [t_0,\min\{T,t_0+\tau\}).
	  \end{equation}

  As $\{b_m^n\}$ is diagonal at $(\bar{p},\bar{t})$ and $b^1_1(\bar{p},\bar{t})=\mathfrak{b}(\bar{p},\bar{t})$, by \eqref{ev-bmn} at the point $(\bar{p},\bar{t})$, we have
  \begin{align}
  	&\partial_t b_1^1-f'K^{k\ell}\nabla_k\nabla_{\ell}{b_1^1}\notag\\
  	=&-\mathfrak{b}^2(f''K^2+f'K)(b^{kk}\nabla_1{h_{kk}})^2-\mathfrak{b}^2f'Kb^{kk}b^{\ell\ell}(\nabla_1{h_{k\ell}})^2\notag\\
  	&-\mathfrak{b}f'(HK+\sigma_{n-1}(\kappa))+nf'K(\mathfrak{b}^2+1)-(f-\phi(t))(1-\mathfrak{b}^2).\label{ev-mathfrakb}
  \end{align}
  Combining \eqref{ev-rho} and \eqref{s2.2drho}, we also have
  \begin{align}\label{s3.evl-rho}
    \partial_t\rho-f'\dot{K}^{k\ell}\nabla_k\nabla_\ell\rho= & (\phi(t)-f(K))\sqrt{1+\frac{|\bar{\nabla}\rho|^2}{\sinh^2{\rho}}}+nf'K\frac{u}{\sinh\rho} \nonumber\\
     & -f'\coth{\rho}\left(\sigma_{n-1}(\kappa)-K b^{k\ell}\nabla_k\rho\nabla_{\ell}\rho\right).
  \end{align}

If $\bar{t}=t_0$, we have
\begin{equation}\label{Q-initial}
	Q(p,t)\leq Q(\bar{p},t_0)\leq \log\max_{p\in M}{\mathfrak{b}(p,t_0)}+2Ac_2
\end{equation}
for $(p,t)\in M\times [t_0,\min\{T,t_0+\tau\})$. In the following, we assume $\bar{t}>t_0$.  We shall apply the maximum principle to the evolution equation of $Q$.

Since $(\bar{p},\bar{t})$ is a maximum point of $Q$, at $(\bar{p},\bar{t})$ there hold
\begin{equation}\label{nablaQ=0}
	0=\nabla_i Q=\frac{\nabla_ib_{11}}{\mathfrak{b}}+A\nabla_i\rho
\end{equation}
and
\begin{align}
	0\leq& \partial_t Q-f'\dot{K}^{k\ell}\nabla_k\nabla_{\ell}{Q}\notag\\
	=&\frac{1}{\mathfrak{b}}\left(\partial_t b_{1}^1-f'\dot{K}^{k\ell}\nabla_k\nabla_{\ell}b_1^1\right)+\frac{f'}{\mathfrak{b}^2}\dot{K}^{k\ell}\nabla_k b_{11}\nabla_{\ell}b_{11}\nonumber\\
&+A\left(\partial_t \rho-f'\dot{K}^{k\ell}\nabla_k\nabla_{\ell}{\rho}\right)\notag\\
	=&-\mathfrak{b}(f''K^2+f'K)(b^{kk}\nabla_1{h_{kk}})^2-\mathfrak{b}f'Kb^{kk}b^{\ell\ell}(\nabla_1{h_{k\ell}})^2\notag\\
	&-f'(HK+\sigma_{n-1}(\kappa))+nf'K(\mathfrak{b}+\frac{1}{\mathfrak{b}})-(f-\phi(t))(\frac{1}{\mathfrak{b}}-\mathfrak{b})\notag\\
	&+\frac{f'K}{\mathfrak{b}^2} b^{kk}(\nabla_k b_{11})^2+A(\phi(t)-f)\sqrt{1+\frac{|\bar{\nabla}\rho|^2}{\sinh^2{\rho}}}\notag\\
	&-Af'\coth{\rho}\left(\sigma_{n-1}(\kappa)-K  b^{kk}(\nabla_k\rho)^2\right)+nAf'K\frac{u}{\sinh\rho}\notag\\
	=&:Q_1+Q_2,\label{eq-sepa}
\end{align}
where $Q_1$ denote the terms involving $\phi(t)$:
\begin{equation}\label{eq-Q_1}
	Q_1=\phi(t)\left(\frac{1}{\mathfrak{b}}-\mathfrak{b}+A\sqrt{1+\frac{|\bar{\nabla}\rho|^2}{\sinh^2{\rho}}}\right),
\end{equation}
and $Q_2$ denote the remaining terms:
\begin{align}
	Q_2=&-\mathfrak{b}(f''K^2+f'K)(b^{kk}\nabla_1{h_{kk}})^2\nonumber\\
&-\mathfrak{b}f'Kb^{kk}b^{\ell\ell}(\nabla_1{h_{k\ell}})^2+\frac{f'K}{\mathfrak{b}^2} b^{kk}(\nabla_k b_{11})^2\notag\\
	&-f'(HK+\sigma_{n-1}(\kappa))+nf'K(\mathfrak{b}+\frac{1}{\mathfrak{b}})-f(\frac{1}{\mathfrak{b}}-\mathfrak{b})\notag\\
	&-Af'\coth{\rho}\left(\sigma_{n-1}(\kappa)-K  b^{kk}(\nabla_k\rho)^2\right)\notag\\
	&-Af\sqrt{1+\frac{|\bar{\nabla}\rho|^2}{\sinh^2{\rho}}}+nAf'K\frac{u}{\sinh\rho}.\label{eq-Q_2}
\end{align}

\textbf{Estimate of $Q_1$:} By \eqref{ex-u} and \eqref{u-bound}, \eqref{rhobound}, we see that
\begin{equation*}
  \sqrt{1+\frac{|\bar{\nabla}\rho|^2}{\sinh^2{\rho}}}=\frac{\sinh\rho}{u}\leq \frac{\sinh(2c_2)}{\sinh(c_1/2)}~=:c_4.
\end{equation*}
Then, if
\begin{equation}\label{con-b1}
\mathfrak{b}>\frac{Ac_4+\sqrt{A^2c_4^2+4}}{2},
\end{equation}
we have $Q_1<0$.

\textbf{Estimate of $Q_2$:} Firstly, by item \eqref{p4} in Assumption \ref{ass}, we have
\begin{align}
	&-\mathfrak{b}(f''K^2+f'K)(b^{kk}\nabla_1{h_{kk}})^2\leq ~0. \label{es-Q_2-0}
\end{align}
Using \eqref{nablabh} and \eqref{nabla-K}, we also have
\begin{align}
	&-\mathfrak{b}f'Kb^{kk}b^{\ell\ell}(\nabla_1{h_{k\ell}})^2+\frac{f'K}{\mathfrak{b}^2}b^{kk}(\nabla_k b_{11})^2\notag\\
	=&-\mathfrak{b}f'Kb^{kk}b^{\ell\ell}(\nabla_1{h_{k\ell}})^2+f'\mathfrak{b}^2 Kb^{kk}(\nabla_k h_{11})^2\notag\\
	\leq&0,\label{es-Q_2-2}
\end{align}
where the inequality is obtained by discarding the terms with $\ell\neq 1$. For the fourth line of \eqref{eq-Q_2},
\begin{align}
	\text{Line 4 of }\eqref{eq-Q_2} \leq & -Af'\coth{\rho}\left(\sigma_{n-1}(\kappa)-K  \left(\sum_k{b^{kk}}\right)|\nabla\rho|^2\right)\nonumber\\
 =& -Af'\coth{\rho}~\sigma_{n-1}(\kappa)\left(1-|\nabla\rho|^2\right)\nonumber\\
 \leq & -c_3Af'\coth{\rho}~\sigma_{n-1}(\kappa)\nonumber\\
 \leq & -c_3Af'\sigma_{n-1}(\kappa),\label{es-Q_2-4}
\end{align}
where we used the estimate \eqref{s3.nablarho} and that $c_3$ is the constant in \eqref{s3.nablarho} which depends only on $n$ and $M_0$.

Substituting \eqref{es-Q_2-0}-\eqref{es-Q_2-4} into \eqref{eq-Q_2}, we have
\begin{align}
	Q_2\leq& -f'(HK+\sigma_{n-1}(\kappa))+nf'K(\mathfrak{b}+\frac{1}{\mathfrak{b}})-f(\frac{1}{\mathfrak{b}}-\mathfrak{b})\notag\\
	&-c_3Af'~\sigma_{n-1}(\kappa)-Af\sqrt{1+\frac{|\bar{\nabla}\rho|^2}{\sinh^2{\rho}}}+nAf'K\frac{u}{\sinh\rho}\notag\\
\leq & -f'\sigma_{n-1}(\kappa)\left(1+c_3A\right)+(nf'K+f)\mathfrak{b}\nonumber\\
&-f'HK+\frac{nf'K}{\mathfrak{b}}+nAf'K\frac{u}{\sinh\rho}\nonumber\\
=&-\frac{f'K}{\mathfrak{b}}\biggl[(1+c_3A)\left(\sum_{k}b^{kk}\right)\mathfrak{b}-\left(n+\frac{f}{f'K}\right)\mathfrak{b}^2\nonumber\\
&\qquad -\frac{nAu}{\sinh\rho}\mathfrak{b}+\mathfrak{b}H-n\biggr],\label{es-Q_2-5}
\end{align}
where we have thrown away the negative terms $-\frac{f}{\mathfrak{b}}$ and $-Af\sqrt{1+\frac{|\bar{\nabla}\rho|^2}{\sinh^2{\rho}}}$ in the second inequality of \eqref{es-Q_2-5}. Since
\begin{equation}\label{es-Q_2-6}
	\sum_k{b^{kk}}>\mathfrak{b},\quad \frac{f}{f'K}\leq \Theta,\quad \frac{u}{\sinh\rho}\leq 1,\quad H\geq\frac{n}{\mathfrak{b}},
\end{equation}
where $\Theta$ is the constant in item \eqref{p3} in Assumption \ref{ass}, substituting \eqref{es-Q_2-6} into \eqref{es-Q_2-5} gives the following estimates
\begin{equation}\label{es-Q_2-7}
	Q_2\leq -f'K\Big((1+c_3A-n-\Theta)\mathfrak{b}-nA\Big).
\end{equation}
Choosing
\begin{equation}\label{s3.A}
  A=\frac{n+\Theta}{c_3},
\end{equation}
which depends only on $n, M_0$ and $\Theta$, we see that if
\begin{equation}\label{con-b2}
	\mathfrak{b}>\frac{n(n+\Theta)}{c_3}
\end{equation}
then $Q_2<0$.

Combing \eqref{eq-sepa} with \eqref{con-b1} and \eqref{con-b2}, we know that if
 \begin{equation*}
 	\mathfrak{b}>\max\left\{\frac{Ac_4+\sqrt{A^2c_4^2+4}}{2},\frac{n(n+\Theta)}{c_3}\right\}=:a_1,
 \end{equation*}
where $A=\frac{n+\Theta}{c_3}$, then at the point $(\bar{p},\bar{t})$, we have
\begin{equation*}
	0\leq \partial_t Q-f'\dot{K}^{k\ell}\nabla_k\nabla_{\ell}{Q}=Q_1+Q_2<0,
\end{equation*}
which leads to a contradiction. Therefore we have
\begin{equation}\label{uppb}
	\mathfrak{b}(\bar{p},\bar{t})\leq~a_1.
\end{equation}
Since $(\bar{p},\bar{t})$ is a maximum point of the function $Q$, Combining \eqref{Q-initial} with \eqref{uppb}, we have
\begin{equation}\label{upp-Q}
	Q(p,t)\leq\max\left\{\log\max_{p\in M}{\mathfrak{b}(p,t_0)}+2Ac_2,\log a_1+2Ac_2\right\}
\end{equation}
for $(p,t)\in M\times [t_0,\min\{T,t_0+\tau\})$. Hence we have
\begin{align}\label{upperb-1}
	\mathfrak{b}(p,t)\leq& \max\left\{\max_{p\in M}{\mathfrak{b}(p,t_0)},a_1\right\}\mathrm{e}^{A(2c_2-\frac{c_1}{2})}\nonumber\\
=&:\max\left\{\max_{p\in M}{\mathfrak{b}(p,t_0)},a_1\right\}\mathrm{e}^{\Lambda_1}
\end{align}
for $(p,t)\in M\times [t_0,\min\{T,t_0+\tau\})$, where $\Lambda_1=\frac{n+\Theta}{a_1}(2c_2-\frac{c_1}{2})$ depending only on $n, M_0$ and $\Theta$.

Note that $t_0\in [t_0-\frac{\tau}{2},\min\{T,t_0+\frac{\tau}{2}\})$. Applying the above argument for the time interval $[t_0-\frac{\tau}{2},\min\{T,t_0+\frac{\tau}{2}\})$ gives
\begin{equation}\label{upperb-2}
	\max_{p\in M}{\mathfrak{b}(p,t_0)}\leq \max\left\{\max_{p\in M}{\mathfrak{b}(p,t_0-\frac{\tau}{2})},a_1\right\}\mathrm{e}^{\Lambda_1}.
\end{equation}
Combining \eqref{upperb-1} with \eqref{upperb-2} and the fact that $\mathrm{e}^{\Lambda_1}\geq 1$, we have
\begin{equation}
	\mathfrak{b}(p,t)\leq \max\left\{\max_{p\in M}{\mathfrak{b}(p,t_0-\frac{\tau}{2})},a_1\right\}\mathrm{e}^{2\Lambda_1}.
\end{equation}
By repeating the argument finitely many times, we finally get
\begin{align}
	\mathfrak{b}(p,t)&\leq \max\left\{\max_{p\in M}{\mathfrak{b}(p,0)},a_1\right\}\mathrm{e}^{\left(\left[\frac{2t_0}{\tau}\right]+2\right)\Lambda_1}\notag\\
	&\leq \max\left\{\max_{p\in M}{\mathfrak{b}(p,0)},a_1\right\}\mathrm{e}^{\left(\frac{2t}{\tau}+2\right)\Lambda_1}:=\Lambda_2\mathrm{e}^{\frac{2\Lambda_1}{\tau}t}
\end{align}
for all $(p,t)\in M\times [t_0,\min\{T,t_0+\tau\})$, where $[\cdot]$ denotes the integer part of a real constant, and $\Lambda_1,\Lambda_2:= \max\left\{\max_{p\in M}{\mathfrak{b}(p,0)},a_3\right\}\mathrm{e}^{2\Lambda_1}$ are constants depending only on $n,M_0$ and $\Theta$. Since $t_0$ is arbitrary, we conclude that the principal curvatures $\kappa_i$ of the solution $M_t$ of the flow \eqref{flow-VMCF} satisfy
\begin{equation*}
	\kappa_i\geq \Lambda_2^{-1}\mathrm{e}^{-\frac{2\Lambda_1}{\tau}t},\qquad i=1,\dots,n,
\end{equation*}
for all time $t\in [0,T)$. This completes the proof of Proposition \ref{preserve convex}.
\end{proof}
\begin{rem}
The auxiliary function $Q$ defined in \eqref{eq-Q} has also been used in \cite{LZ24} to derive a uniform positive lower bound of the principal curvatures along an anisotropic Gauss curvature type flow in the hyperbolic space.  
\end{rem}

\subsection{Upper bound of Gauss curvature} \label{sec.upK}
Now we prove the upper bound of the Gauss curvature for the solution $M_t$ along the flow \eqref{flow-VMCF}.
\begin{prop}\label{propKupp}
	Let $M_t,\ t\in[0,T)$ be the smooth solution of the flow \eqref{flow-VMCF} starting from a smooth closed convex hypersurface $M_0$. If $T<\infty$, then there exists a constant $C$ depending on $n, M_0, \Theta$ and $T$ such that the Gauss curvature $K$ of $M_t$ satisfies
	\begin{equation*}
		\max_{M_t}K\leq C
	\end{equation*}
	for any $t\in [0,T)$, where $\Theta$ is the constant in item \eqref{p3} of Assumption \ref{ass}.
\end{prop}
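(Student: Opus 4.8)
The plan is to adapt Tso's technique for the Gauss curvature flow to the present nonhomogeneous, volume-preserving setting in hyperbolic space. Working on a fixed subinterval $[t_0,\min\{T,t_0+\tau\})$ provided by Lemma \ref{lem3.2}, fix the inball $B_{\rho_0}(p_0)$ of $\Omega_{t_0}$ and use $p_0$ as the base point, so that by \eqref{u-bound}--\eqref{rhobound} the support function $u=\langle V,\nu\rangle$ satisfies $0<2c\le u\le \sinh(2c_2)$ and $\cosh\rho_{p_0}$ is pinched between positive constants. The auxiliary quantity to test with the maximum principle is
\begin{equation*}
	W=\frac{f(K)}{u-c},
\end{equation*}
where $c$ is the constant from \eqref{u-bound} (so that $u-c\ge c>0$ on the relevant time interval). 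First I would compute the evolution equation of $W$ using \eqref{eq-KK} for $\partial_t f(K)$ and \eqref{equeq} for $\partial_t u$, together with $\partial_t c=0$. The parabolic operator is $\partial_t-f'\dot K^{ij}\nabla_i\nabla_j$; since $\dot K^{ij}=Kb^{ij}$ this is uniformly parabolic once $W$ (equivalently $K$, via item \eqref{p1} that $f$ is increasing) is large.

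At an interior spatial maximum of $W$ at time $\bar t>t_0$ we have $\nabla_i W=0$, i.e. $\nabla_i f(K)=W\nabla_i u$, which lets me eliminate the gradient terms coming from $\nabla_i u$ in the cross term $-2\frac{f'\dot K^{ij}\nabla_i f(K)\nabla_j(u-c)}{(u-c)^2}$, turning it into $+2\frac{W}{u-c}f'\dot K^{ij}\nabla_i u\nabla_j u\ge 0$ — wait, that term has the wrong sign, so more carefully one writes the evolution of $\log W=\log f(K)-\log(u-c)$ and at the maximum discards the good (nonpositive) gradient-square term $-\frac{f'\dot K^{ij}\nabla_i u\nabla_j u}{(u-c)^2}$ that appears with a favorable sign. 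The zeroth-order terms to control are: from $f(K)$, the reaction terms $f'\dot K^{ij}(f-\phi)(h_i^kh_k^j-\delta_i^j)$; from $u$, the term $\cosh\rho_{p_0}(\phi-f-nKf')+f'KHu$. The key structural facts are $\dot K^{ij}h_i^kh_k^j=HK$ (so $f'\dot K^{ij}h_i^kh_k^j f = f'HKf$, which is dangerous and large), balanced against $-\frac{f}{u-c}\cdot f'KHu\cdot\frac{1}{\,\cdot\,}$ type terms; the decisive cancellation is the classical one: the $+f'HKf$ reaction term coming from $f(K)$ is dominated by the $-nKf'\cosh\rho_{p_0}\cdot W$ term from $\partial_t u$ once $K$ is large, because $\cosh\rho_{p_0}\ge 1$ while the dangerous term, after dividing by $u-c$, carries only a bounded geometric factor $u/(u-c)$ and no extra power of $K$ beyond what $W$ already contains. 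Here items \eqref{p2} and \eqref{p3} of Assumption \ref{ass} enter: \eqref{p3} gives $f\le \Theta f'K$ so that every occurrence of $f$ can be traded for $\Theta f'K$, making all reaction terms comparable to $f'K\cdot(\text{bounded})\cdot W$ or to $f'K^2/(u-c)$; and \eqref{p2} guarantees that the global term $\phi(t)=\frac{1}{|M_t|}\int_{M_t} f(K)\,d\mu_t$ is bounded \emph{above} at $\bar t$ in terms of $\max K$ — but what one really needs is only a crude bound, namely that $\phi(\bar t)$ contributes terms of lower order than $W^2$; alternatively one bounds $\phi(t)$ a priori using \eqref{in-out-radius} and the monotonicity of quermassintegrals, or simply notes $0<\phi(t)\le \max_{M_t}f(K)=f(\max K)$ and that $f(\max K)/(u-c)\le C\,W(\bar t)$ up to bounded factors.

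Carrying this out, at the maximum point one arrives at an inequality of the shape
\begin{equation*}
	0\le \frac{d}{dt}\Big|_{\max}W \le W^2\big(C_1-C_2\,K^{1-1/n}\big)+C_3 W,
\end{equation*}
or more transparently $0\le -c\,f'K^2/(u-c)^2+C(1+\phi(\bar t))\,W+C\,W$, and since $f'K^2\ge$ (const)$\,\cdot K^{1+?}$ grows faster in $K$ than the linear-in-$W$ terms (using $f(K)/(u-c)=W$ and $f'K\ge f/\Theta$, so $f'K^2\ge KW(u-c)/\Theta$ with $K\to\infty$ as $W\to\infty$ by \eqref{p1}--\eqref{p2}), this forces $K(\bar t,\bar p)\le C$, hence $W(\bar t)\le C$, hence $W\le\max\{W(t_0),C\}$ on $[t_0,\min\{T,t_0+\tau\})$. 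Translating back, $\max_{M_t}f(K)\le (\sup u)\cdot\max\{W(t_0),C\}$, and since $f$ is increasing with $f(x)\to\infty$ this bounds $\max_{M_t}K$ in terms of $\max_{M_{t_0}}f(K)$, $n$, $M_0$, $\Theta$. Finally, stepping from $[t_0,t_0+\tau)$ to the next interval exactly as in the proof of Proposition \ref{preserve convex} (covering $[0,T)$ by $\lceil T/\tau\rceil$ overlapping intervals, each time feeding the previous bound in as the ``initial'' bound), one obtains $\max_{M_t}K\le C$ with $C$ depending on $n,M_0,\Theta$ and $T$.

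The main obstacle I anticipate is the careful bookkeeping of the global term $\phi(t)$ and of the reaction terms $(f-\phi)(h_i^kh_k^j-\delta_i^j)$ and $\cosh\rho_{p_0}(\phi-f-nKf')$ in hyperbolic space: unlike the Euclidean case there are extra $-\delta_i^j$ and $\cosh\rho_{p_0}$ contributions, and one must check that the nonhomogeneity of $f$ (no fixed scaling) does not spoil the sign of the dominant term. The resolution is systematic use of item \eqref{p3} to replace $f$ by $\Theta f'K$ everywhere, use of item \eqref{p4} only indirectly (it already secured convexity, hence $b^{ij}>0$ and the parabolicity), and the a priori $C^0$/$C^1$ bounds \eqref{u-bound}--\eqref{s3.nablarho} to treat all $\rho$-dependent factors as harmless constants; the positive lower curvature bound from Proposition \ref{preserve convex} is \emph{not} needed here, only convexity.
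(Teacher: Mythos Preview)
Your overall strategy—Tso's auxiliary function $W=f(K)/(u-c)$, maximum principle on $[t_0,\min\{T,t_0+\tau\})$, then time-stepping—matches the paper's proof. But there is a genuine gap, and it is precisely the point you flag as harmless at the end: your assertion that ``the positive lower curvature bound from Proposition~\ref{preserve convex} is \emph{not} needed here, only convexity'' is incorrect, and without it your argument does not close.

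The issue is the hyperbolic reaction term you yourself single out. Expanding $f'\dot K^{ij}(f-\phi)(h_i^kh_{kj}-g_{ij})$ produces, besides $f'(f-\phi)HK$, the term $-f'(f-\phi)\sigma_{n-1}(\kappa)$, since $\dot K^{ij}g_{ij}=Kb^{ij}g_{ij}=\sigma_{n-1}(\kappa)$. After dividing by $u-c$ and separating the $\phi$-part, the evolution of $W$ contains the bad contribution $+\dfrac{\phi(t)}{u-c}\,f'\sigma_{n-1}(\kappa)$ (the companion $-Wf'\sigma_{n-1}$ has the good sign but does not fully cancel it, because $\phi(t)/(u-c)$ can exceed $\widetilde W$ by the fixed ratio $(\sinh(2c_2)-c)/c>1$). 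The only competing good term is $-c\,\dfrac{f'K}{f}HW^2$, and to absorb the bad term you would need $HK\ge C\,\sigma_{n-1}(\kappa)$ at the maximum point. This fails in general: $HK/\sigma_{n-1}=\big(\sum_i\kappa_i\big)\big/\big(\sum_i\kappa_i^{-1}\big)$, and for $n\ge 4$ one can have $K\to\infty$ while this ratio tends to $0$ (take $\kappa=(M^{2-n},M,\dots,M)$, so $K=M$ but $HK/\sigma_{n-1}\sim (n-1)M^{3-n}\to 0$). Thus convexity alone gives no control of $\sigma_{n-1}$ in terms of $K$.

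The paper resolves this exactly by invoking Proposition~\ref{preserve convex}: the time-dependent lower bound $\kappa_i\ge \Lambda_2^{-1}e^{-2\Lambda_1 t/\tau}$ yields $\sigma_{n-1}(\kappa)=K\sum_i\kappa_i^{-1}\le nK\,\Lambda_2 e^{2\Lambda_1 T/\tau}$, after which the bad term becomes $\le C(n,M_0,\Theta,T)\,\dfrac{f'K}{f}\widetilde W^2$ and is beaten by $-cn\,\dfrac{f'K^{1+1/n}}{f}\widetilde W^2$ once $K^{1/n}$ exceeds a constant depending on $T$. This is precisely where the $T$-dependence of the final bound $C$ originates; your sketch gives no mechanism producing $T$-dependence, which is another symptom of the missing ingredient. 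Fix: keep your outline, but insert the $\kappa_i$-lower bound from Proposition~\ref{preserve convex} to estimate $\sigma_{n-1}$ before comparing with the $-cf'KH$ term.
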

\proof
For any given $t_0\in[0,T)$, let $B_{\rho_0}(p_0)$ be the inball of $\Omega_{t_0}$ centered at some point $p_0\in \Omega_{t_0}$, where $\rho_0=\rho_{-}(t_0)$. Consider the support function $u(x,t)=\sinh \rho_{p_0}(x)\langle{\partial_{\rho_{p_0}},\nu}\rangle$ of $M_t$ with respect to the point $p_0$, where $\rho_{p_0}(x)$ is the distance function in $\mathbb{H}^{n+1}$ to the point $p_0$. Since $M_t$ is convex for all $t\in[0,T)$, by \eqref{u-bound} we have
\begin{equation}\label{equbound}
	2c\leq u\leq \sinh(2c_2)
\end{equation}
on $M_t$ for all $t\in \left[t_0,\min\{T,t_0+\tau\}\right)$. We define the auxiliary function as in \cite{Tso85}
\begin{equation*}
	W=\frac{f(K)}{u-c},
\end{equation*}
which is well-defined for $t\in \left[t_0,\min\{T,t_0+\tau\}\right)$. We shall apply the maximum principle to the evolution equation of $W$ to derive the upper bound of $K$.

Combining \eqref{eq-KK} and \eqref{equeq}, we compute that along the flow \eqref{flow-VMCF} the function $W$ evolves as
\begin{align}\label{eqW}
	\frac{d}{d t}{W}=&f'\dot{K}^{ij}\left(W_{ij}+\frac{2}{u-c}u_iW_j\right)\nonumber\\
&-\frac{\phi(t)}{u-c}\Big(f'(HK-\sigma_{n-1}(\kappa))+W\cosh{\rho_{p_0}}(x)\Big)\nonumber\\
	&+\frac{f}{(u-c)^2}(f+nf'K)\cosh{\rho_{p_0}}(x)\nonumber\\
&-\frac{cf}{(u-c)^2}f'HK-Wf'\sigma_{n-1}(\kappa)\nonumber\\
	\leq& f'\dot{K}^{ij}\left(W_{ij}+\frac{2}{u-c}u_iW_j\right)+\frac{\phi(t)}{u-c}f'\sigma_{n-1}(\kappa)\nonumber\\
	&+(1+n\frac{f'K}{f})W^2\cosh{\rho_{p_0}}(x)-c\frac{f'K}{f}HW^2.
\end{align}

Let	$\widetilde{W}(t)=\max_{M_t}W(x,t)$. Noting that $f(K)=(u-c)W$, by the definition \eqref{eqphi} and the upper bound \eqref{equbound} of $u$, we have:
\begin{align*}
		\phi(t)=&\frac{1}{|M_t|}\int_{M_t}f(K)d\mu_t\\
\leq&\max_{M_t}f(K(\cdot,t))	\leq(\sinh(2c_2)-c)\widetilde{W}.
\end{align*}
By the lower bound on the principal curvatures in Lemma \ref{preserve convex}, we also have
\begin{align}\label{s3.sigman-1}
	\sigma_{n-1}(\kappa)=&K(\frac{1}{\kappa_1}+\cdots\frac{1}{\kappa_n})\nonumber\\
\leq &nK(\min_{1\leq i\leq n} \kappa_i)^{-1}\leq nK\Lambda_2\mathrm{e}^{\frac{2\Lambda_1}{\tau}T}.
\end{align}
 It follows that
 \begin{equation}\label{s4.2-1}
 	\frac{\phi(t)}{u-c}f'\sigma_{n-1}(\kappa)\leq ~n (\sinh(2c_2)-c)\Lambda_2\mathrm{e}^{\frac{2\Lambda_1T}{\tau}}\frac{f'K}{f}\widetilde{W}^2.
 \end{equation}
Since $H\geq nK^{1/n}$, the last term of \eqref{eqW} satisfies
\begin{equation}\label{s4.2-3}
	-c\frac{f'K}{f}HW^2\leq -nc\frac{f'K^{\frac{n+1}{n}}}{f}W^2.
\end{equation}
Substituting \eqref{s4.2-1} and \eqref{s4.2-3} into \eqref{eqW}, we arrive at
\begin{align}\label{s4.2-4}
 \frac{d}{dt}\widetilde{W}\leq &\widetilde{W}^2\cosh(2c_2)+n\widetilde{W}^2\frac{f'K}{f}\Big( (\sinh(2c_2)-c)\Lambda_2\mathrm{e}^{\frac{2\Lambda_1T}{\tau}}\nonumber\\
 &\quad +\cosh(2c_2)-cK^{1/n}\Big).
\end{align}

Denote
\begin{equation*}
  \bar{c}=(nc)^{-n}\left((\cosh(2c_2)+1)\Theta+n(\sinh(2c_2)-c)\Lambda_2\mathrm{e}^{\frac{2\Lambda_1T}{\tau}}+n\cosh(2c_2)\right)^n,
\end{equation*}
which depends on $n, M_0, \Theta$ and $T<\infty$. Suppose that
\begin{equation*}
  \widetilde{W}(t)\geq \frac{f(\bar{c})}{c}.
\end{equation*}
Then for any $x\in M_t$ with $W(x,t)=\widetilde{W}(t)$, there holds
\begin{align*}
  f(K(x,t))=&W(x,t)(u-c)\nonumber\\
  =&\widetilde{W}(t)(u-c)\geq f(\bar{c}).
\end{align*}
As $f$ is strictly increasing, this implies that
\begin{equation}\label{s3.Klbd}
  K(x,t)\geq \bar{c}
\end{equation}
for any $x\in M_t$ with $W(x,t)=\widetilde{W}(t)$. Substituting \eqref{s3.Klbd} into \eqref{s4.2-4} and noting that $f'K/f\geq 1/\Theta$ by item \eqref{p3} of Assumption \ref{ass}, we have
\begin{equation}\label{s4.2-5}
 \frac{d}{dt}\widetilde{W}(t)\leq -\widetilde{W}^2(t)
\end{equation}
whenever $\widetilde{W}(t)\geq {f(\bar{c})}/{c}$. It follows that
\begin{equation}\label{s4.2-7}
\widetilde{W}(t)\leq\max\left\{ \frac{1}{\widetilde{W}^{-1}(t_0)+t-t_0},\frac{f(\bar{c})}{c}\right\}
\end{equation}
for all time $t\in \left[t_0,\min\{T,t_0+\tau\}\right)$.

For $t_0=0$, we obtain from \eqref{s4.2-7} the upper bound
\begin{equation*}
	\widetilde{W}(t)\leq\max\left\{\widetilde{W}(0),\frac{f(\bar{c})}{c}\right\},\quad \forall t\in[0,\min\{\tau,T\})
\end{equation*}
and so
\begin{align}\label{s4.2-8}
	f(K)=&(u-c)W\nonumber\\
\leq &\sinh(2c_2)\max\left\{\widetilde{W}(0),\frac{f(\bar{c})}{c}\right\},\quad \forall t\in[0,\min\{\tau,T\}).
\end{align}
Next, for $t_0=\tau/2$, the estimate \eqref{s4.2-7} implies
\begin{align*}
\widetilde{W}(t)&\leq\max\left\{\frac{1}{\widetilde{W}^{-1}(t_0)+t-t_0},\frac{f(\bar{c})}{c}\right\}\\
&\leq\max\left\{\frac{1}{t-t_0},\frac{f(\bar{c})}{c}\right\}\\
&\leq \max\left\{\frac{2}{\tau},\frac{f(\bar{c})}{c}\right\}
\end{align*}
for $t\in [\tau, \min\{3\tau/2,T\})$, and so
\begin{equation}\label{s4.2-9}
	f(K)\leq \sinh(2c_2)\max\left\{\frac{2}{\tau},\frac{f(\bar{c})}{c}\right\}
\end{equation}
for $t\in [\tau, \min\{3\tau/2,T\})$. Repeating the above argument for $t_0=m\tau/2(m\geq 2)$, we can get the estimate \eqref{s4.2-9} for $t\in [\frac{(m+1)\tau}2, \min\{\frac{(m+2)\tau}2,T\})$, which covers the whole time interval $[0,T)$.

Combining \eqref{s4.2-8}, \eqref{s4.2-9} and the fact that $f$ is an increasing function with $\lim\limits_{x\to\infty}f(x)=+\infty$ (see items \eqref{p1} and \eqref{p2} in Assumption \ref{ass}), we obtain the upper bound $K\leq C$ for a constant $C$ depending on $n, M_0, \Theta$ and $T$. This completes the proof of Proposition \ref{propKupp}.
\endproof

\section{Long time existence}\label{sec5}
In this section, we prove the long time existence of the flow \eqref{flow-VMCF}. We need to show that the solution remains smooth as long as the curvatures are bounded. To prove this, we need  the following result which is a special case of Theorem 6 in \cite{And2004} (see also \cite[Theorem 3.2]{AleSin10b}).
\begin{thm}[\cite{And2004}]\label{s4.thm1}
	Let $\Omega$ be a domain in $\mathbb{R}^n$. Let $u\in C^4(\Omega\times [0,T))$ be a function satisfying
	\begin{equation*}
		\frac{\partial}{\partial t}u=F(D^2u,Du,u,x,t),
	\end{equation*}
	where $F$ is $C^2$ and is elliptic, i.e., $\lambda I\leq (\dot{F}^{ij})\leq \Lambda I$ for some constants $\Lambda>\lambda>0$. Suppose that $F$ can be written as $F=\varphi(G(D^2u,Du,u,x,t))$, where $G$ is concave with respect to $D^2u$ and $\varphi$ is an increasing function on the range of $G$. Then in any relatively compact $\Omega'\subset\Omega$ and for any $\tau\in (0,T)$ we have
	\begin{equation*}
		\|u\|_{C^{2,\beta}(\Omega'\times (\tau,T))}\leq C,
	\end{equation*}
	where $\beta\in (0,1)$ depends on $n,\lambda$ and $\Lambda$, and $C$ depends on $\lambda,\Lambda,  \|u\|_{C^{2}(\Omega\times [0,T))}$, $\tau, \mathrm{dist}(\Omega',\partial\Omega)$ and the bounds on the first and second derivatives of $G$.
\end{thm}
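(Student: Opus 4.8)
The plan is to reduce the statement to two classical estimates --- the Krylov--Safonov interior H\"older estimate for linear uniformly parabolic equations in non-divergence form, and the Evans--Krylov $C^{2,\beta}$ estimate for equations that are concave in the Hessian and uniformly elliptic --- the difficulty being that $F$ itself need not be concave or convex in $D^2u$, so Evans--Krylov does not apply to the equation for $u$ directly; it is only the monotone reparametrisation $\varphi\circ G$ of the concave operator $G$. The key point I would exploit is that \emph{once $u_t$ is known to be locally H\"older continuous}, the identity $G(D^2u,Du,u,x,t)=\varphi^{-1}(u_t)$ exhibits $u(\cdot,t)$, for each fixed $t$, as a solution of a concave uniformly elliptic equation with H\"older data, to which Evans--Krylov applies; and the H\"older continuity of $u_t$ is supplied by Krylov--Safonov applied to the linearised equation.

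\textbf{Step 1 (H\"older bound for $u_t$).} First, since $u\in C^4$ the function $v:=u_t$ lies in $C^2(\Omega\times[0,T))$, and differentiating the equation in $t$ gives
\begin{equation*}
\partial_t v=\dot{F}^{ij}D_{ij}v+\dot{F}^{p}D_pv+F_u\,v+F_t,
\end{equation*}
the derivatives of $F$ being evaluated along the solution. By hypothesis $\lambda I\le(\dot{F}^{ij})\le\Lambda I$, so this is a linear uniformly parabolic equation whose lower-order coefficients and inhomogeneity $F_t$ are bounded in terms of $\lambda,\Lambda,\|u\|_{C^2}$ and the first derivatives of $F$. The interior Krylov--Safonov estimate then yields $\|v\|_{C^{\alpha}(\Omega'\times(\tau,T))}\le C$ in the parabolic H\"older norm, with $\alpha\in(0,1)$ depending only on $n,\lambda,\Lambda$ and $C$ on the listed data.

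\textbf{Step 2 (reduction to a concave elliptic equation).} Since $\|u\|_{C^2}$ is bounded, $D^2u$ takes values in a fixed compact set, so along the solution the argument of $\varphi$ stays in a compact subset of the range of $G$; because $F=\varphi\circ G$ is $C^2$ and $G$ has non-degenerate (hence positive) derivative in the Hessian variable, $\varphi$ is $C^1$ there with $\varphi'$ bounded between two positive constants. From $\dot{F}^{ij}=\varphi'(G)\,\dot{G}^{ij}$ it follows that $G$, restricted to the relevant arguments, is uniformly elliptic with constants controlled by $\lambda,\Lambda$ and the derivative bounds of $G$, and that $\varphi^{-1}$ is Lipschitz. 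Hence $h(x,t):=\varphi^{-1}(u_t(x,t))=G(D^2u,Du,u,x,t)$ inherits from Step 1 a bound $\|h\|_{C^{\alpha}(\Omega'\times(\tau,T))}\le C$, and for each fixed $t$ the function $u(\cdot,t)$ solves
\begin{equation*}
G\big(D^2u(\cdot,t),Du(\cdot,t),u(\cdot,t),\cdot,t\big)=h(\cdot,t),
\end{equation*}
a uniformly elliptic equation, concave in $D^2u$, whose dependence on the space variable (through $Du,u,x$ and the right-hand side $h$) is H\"older with exponent and norm controlled by Step 1 and the $C^2$ bound on $u$. The interior Evans--Krylov estimate then gives $\|D^2u(\cdot,t)\|_{C^{\beta}(\Omega'')}\le C$ uniformly in $t\in(\tau,T)$, for some $\beta\in(0,1)$ depending only on $n,\lambda,\Lambda$.

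\textbf{Step 3 and the main obstacle.} It remains to promote the parabolic H\"older bound on $u_t$ together with the spatial $C^{\beta}$ bound on $D^2u$ to a bound on $\|u\|_{C^{2,\beta'}}$ in the full parabolic norm: this is routine, since time differences of $u(\cdot,t)$ solve linear elliptic equations whose right-hand sides are controlled by a power of the time increment, so interior linear estimates yield H\"older continuity of $D^2u$ in $t$ as well (equivalently, freeze the leading coefficients in $u_t=F(D^2u,Du,u,x,t)$ and apply interior parabolic Schauder theory, interpolating the two one-sided bounds); after shrinking the domain and relabelling $\beta,\Omega'$ and $\tau$ one obtains the assertion. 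I expect the substantive step to be Step~2: making the concavity of $G$ usable requires reading off, from ``$F$ is $C^2$ and uniformly elliptic'' together with the $C^2$ bound on $u$, that $G$ is uniformly elliptic on the relevant range and that $\varphi^{-1}$ is Lipschitz --- without these the reduction fails. Once that structural bookkeeping is done the argument is just Krylov--Safonov followed by Evans--Krylov, and the only genuinely new ingredient over the standard concave theory is the time-differentiation trick that turns the non-concave fully nonlinear equation for $u$ into a concave one for $u(\cdot,t)$.
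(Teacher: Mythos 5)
The paper does not prove this statement at all --- it is quoted as a special case of Theorem 6 of Andrews (2004) (see also Alessandroni--Sinestrari, Theorem 3.2) --- so the only comparison available is with the cited source. Your argument (Krylov--Safonov applied to the linearised equation for $u_t$ to get a parabolic H\"older bound, then rewriting the equation as the concave, uniformly elliptic equation $G=\varphi^{-1}(u_t)$ with H\"older right-hand side and invoking Evans--Krylov, with the bounds on $\dot F^{ij}$ and on the derivatives of $G$ supplying the two-sided control of $\varphi'$ on the relevant compact set) is precisely the standard proof of that cited result, so it is essentially the same approach.
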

The advantage of the above theorem is that it allows to relax the concavity hypothesis of the usual regularity theorem for fully nonlinear parabolic equation.

\begin{thm}\label{long}
	Let $M_0$ be a smooth closed convex hypersurface in $\mathbb{H}^{n+1}$ and $M_t$ be the smooth solution of the flow \eqref{flow-VMCF} starting from $M_0$ with $\phi(t)$ given by \eqref{eqphi} and the speed function satisfies the assumption \ref{ass}. Then $M_t$ remains convex and exists for all time $t\in[0,\infty)$.
\end{thm}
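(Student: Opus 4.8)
The plan is to combine the a priori estimates already established with a standard continuation/bootstrapping argument. Suppose $M_t$ is the maximal smooth convex solution on $[0,T)$ and assume for contradiction that $T<\infty$. By Proposition \ref{preserve convex}, the principal curvatures satisfy $\kappa_i\geq \Lambda_2^{-1}\mathrm{e}^{-2\Lambda_1 t/\tau}$ on $[0,T)$, so on the finite interval $[0,T)$ they are bounded below by a positive constant depending on $n,M_0,\Theta,T$; in particular $M_t$ stays strictly convex up to time $T$. By Proposition \ref{propKupp}, the Gauss curvature $K$ is bounded above by a constant $C=C(n,M_0,\Theta,T)$ on $[0,T)$. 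Since $K=\kappa_1\cdots\kappa_n$ and each $\kappa_i$ is bounded below by a positive constant, the upper bound on $K$ together with these lower bounds forces each $\kappa_i$ to be bounded above as well (write $\kappa_j = K/\prod_{i\neq j}\kappa_i \leq C\Lambda_2^{n-1}\mathrm{e}^{2(n-1)\Lambda_1 T/\tau}$). Hence we have two-sided uniform bounds $0<c\leq \kappa_i\leq C'$ on $[0,T)$, and consequently the speed $f(K)$ is bounded between two positive constants (using items \eqref{p1}, \eqref{p2} of Assumption \ref{ass}), and so is $\phi(t)$ by \eqref{eqphi}.

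Next I would promote these $C^2$-type bounds to uniform $C^{2,\beta}$ bounds. The flow, written in terms of the radial graph function $\rho$ over $\mathbb{S}^n$ (valid here because the $C^0,C^1$ estimates of $\S\ref{sec3}$ together with Lemma \ref{Lemma-ioradius} keep $M_t$ star-shaped with uniformly bounded $\rho$ and $|\bar\nabla\rho|$), takes the form $\partial_t\rho = (\phi(t)-f(K))\sqrt{1+|\bar\nabla\rho|^2/\sinh^2\rho}$ where $K$ is the expression \eqref{eq-Gauss} in $\bar\nabla^2\rho,\bar\nabla\rho,\rho$. Because $f$ is not assumed concave in the second derivatives, I cannot invoke Krylov–Safonov directly; instead, following the strategy indicated in the introduction, I write the right-hand side as $\varphi\circ G$ where $G$ is built from $K^{1/n}$ — a concave function of the second fundamental form, hence (after passing through the graph representation and using the uniform ellipticity coming from the two-sided curvature bounds) a concave operator in $\bar\nabla^2\rho$ — and $\varphi$ is an increasing function, since $f(K)=f((K^{1/n})^n)$ is increasing in $K^{1/n}$ and the nonlocal $\phi(t)$ contributes only a bounded time-dependent additive term that can be absorbed. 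Applying Theorem \ref{s4.thm1} locally on coordinate charts of $\mathbb{S}^n$ yields a uniform bound $\|\rho\|_{C^{2,\beta}(\mathbb{S}^n\times[0,T))}\leq C$ with $\beta\in(0,1)$ depending on the ellipticity constants.

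Once uniform parabolic $C^{2,\beta}$ estimates are in hand, the equation for $\rho$ becomes a uniformly parabolic quasilinear (indeed fully nonlinear with now-Hölder coefficients) equation with Hölder-continuous coefficients, so parabolic Schauder theory (bootstrapping, using that $f\in C^2$ and the geometric quantities are smooth functions of $\rho$ and its derivatives) gives uniform $C^{k,\alpha}$ estimates on $\mathbb{S}^n\times[0,T)$ for every $k$. Therefore $\rho(\cdot,t)$ converges in $C^\infty(\mathbb{S}^n)$ as $t\to T$ to a smooth strictly convex limit $\rho(\cdot,T)$, which we take as new initial data. Short-time existence for the (parabolic, by item \eqref{p1}) flow \eqref{flow-VMCF} then extends the solution past $T$, contradicting the maximality of $T$. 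Hence $T=\infty$, and $M_t$ remains convex for all $t$ by Proposition \ref{preserve convex}.

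The main obstacle is the regularity step: verifying carefully that the flow speed, after the graph reduction, genuinely fits the structural hypothesis $F=\varphi(G)$ of Theorem \ref{s4.thm1} with $G$ concave in $D^2\rho$ and $\varphi$ increasing, uniformly in $t$. This requires (i) checking that $K^{1/n}$, expressed through \eqref{eq-Gauss} in the graph coordinates, is concave as a function of $\bar\nabla^2\rho$ on the relevant open set of convex data — which follows from the classical concavity of $\sigma_n^{1/n}$ on the positive cone combined with the linear dependence of $h_{ij}$ on $\rho_{ij}$ from \eqref{ex-h} — and (ii) controlling the nonlocal term: $\phi(t)$ depends on the whole hypersurface, but since it is a bounded function of $t$ alone it may be treated as part of the lower-order data $(Du,u,x,t)$, and its time-regularity (Lipschitz in $t$, which follows from the evolution equations and the uniform bounds) suffices for the Schauder bootstrap. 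The remaining arguments — deriving the upper curvature bound from the lower bounds plus the $K$ bound, and the continuation argument — are routine.
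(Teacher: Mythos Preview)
Your proposal is correct and follows essentially the same route as the paper's proof: contradiction with $T<\infty$, two-sided curvature bounds from Propositions \ref{preserve convex} and \ref{propKupp}, reduction to the scalar radial-graph equation, the $C^{2,\beta}$ estimate via Theorem \ref{s4.thm1} using that $f(K)$ is an increasing function of the concave operator $K^{1/n}$, and Schauder bootstrapping plus continuation. The only small imprecision is that star-shapedness with bounded $\rho,|\bar\nabla\rho|$ is only guaranteed with respect to a center $p_0$ on subintervals $[t_0,\min\{T,t_0+\tau\})$ (Lemma \ref{lem3.2}), not globally on $[0,T)$; the paper handles this by applying the regularity argument on each such subinterval, and since $\tau$ is uniform this suffices.
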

\proof
We will argue by contradiction. Let $[0,T)$ be the maximal interval such that the solution of the flow \eqref{flow-VMCF} exists with $T<\infty$. Then combining Proposition \ref{preserve convex} and Proposition \ref{propKupp} yields that the principal curvatures $\kappa=(\kappa_1,\dots,\kappa_n)$ of $M_t$ satisfy
\begin{equation}\label{s4.cur}
	0<\underline{\kappa}_0\leq \kappa_i\leq \overline{\kappa}_0,\quad i=1,\dots,n
\end{equation}
for all time $t\in [0,T)$, where the constants $\underline{\kappa}_0, \overline{\kappa}_0$ depend on $n, M_0, \Theta$ and $T$.

To prove the long time existence of the solution $M_t$ of the flow \eqref{flow-VMCF}, we need to derive the higher order regularity estimates. Recall that up to a tangential diffeomorphism, the flow equation \eqref{flow-VMCF} is equivalent to the following scalar parabolic equation
\begin{equation}\label{s4.eqrho}
	\frac{\partial}{\partial t}\rho=\left(\phi(t)-f(K)\right)\sqrt{1+\frac{|\bar{\nabla}\rho|^2}{\sinh^2{\rho}}},
\end{equation}
of the radial graph function $\rho$ over $\mathbb{S}^n$, where $K=K(\bar{\nabla}^2\rho,\bar{\nabla}\rho,\rho)$ is expressed in \eqref{eq-Gauss} and $\bar{\nabla}$ denotes the Levi-Civita connection with respect to the round metric on $\mathbb{S}^n$. Denote the right hand side of \eqref{s4.eqrho} by $F[\bar{\nabla}^2\rho,\bar{\nabla}\rho,\rho,t]$.

For any $t_0\in [0,T)$, we consider the solution of $M_t$ in the time interval $[t_0, \min\{t_0+\tau,T\})$.  Since only $f(K)$ in $F[\bar{\nabla}^2\rho,\bar{\nabla}\rho,\rho,t]$ depends on $\bar{\nabla}^2\rho$, we calculate that
\begin{align}
	\dot{F}^{ij}=&-f'\frac{\partial K}{\partial \rho_{ij}}\sqrt{1+\frac{|\bar{\nabla}\rho|^2}{\sinh^2{\rho}}}\nonumber\\
=&\frac{f'\sinh\rho}{(\sinh^2\rho+|\bar{\nabla} \rho|^2)^{\frac{n+2}{2}}(\sinh \rho)^{2(n-1)}}\frac{\partial \det h_{ij}}{\partial h_{ij}}\sqrt{1+\frac{|\bar{\nabla}\rho|^2}{\sinh^2{\rho}}},
\end{align}
where $h_{ij}$ is expressed in \eqref{ex-h}.  The $C^0, C^1$ estimates obtained in \eqref{rhobound}, \eqref{s3.C1} and the curvature bound \eqref{s4.cur} implies that $F$ is elliptic, i.e., $\lambda I\leq (\dot{F}^{ij})\leq \Lambda I$ for some constants $\Lambda>\lambda>0$ depending on $n, M_0, \Theta$ and $T$.  Moreover, since it's well known that $K^{1/n}$ is a concave operator with respect to second derivatives, and $f(K)$ is a strictly increasing function of $K^{1/n}$, by Theorem \ref{s4.thm1} we derive a $C^{2,\alpha}$ estimate on $\rho$, see also the arguments in \cite{CS10,Mc05} for the $C^{2,\gamma}$ estimate of the solutions to volume preserving curvature flows. Then by the parabolic Schauder theory (see \cite{Lie96}), we can deduce all higher order regularity estimates of $\rho$ on $[t_0, \min\{t_0+\tau,T\})$. As $t_0$ is arbitrary, we can obtain the smoothness of the flow for all time $t\in [0,T)$ and a standard continuation argument then shows that $T=+\infty$.
\endproof

\begin{rem}
	Note that the curvature estimate \eqref{s4.cur} of the solution $M_t$ of the flow \eqref{flow-VMCF} depends on time $t$ and may degenerate as time $t\to\infty$. To study the asymptotical behavior of $M_t$ as $t\to\infty$, we still need to get an uniform curvature estimate which does not depend on time. This will be obtained in the next two sections.
\end{rem}

\section{Hausdorff convergence}\label{sec.hau}
In this section, we prove the monotonicity of the quermassintegral $\mathcal{A}_{n-1}(\Omega_t)$, the subsequential Hausdorff convergence of the solution $M_t$ of \eqref{flow-VMCF} and the convergence of the center of the inner ball of $\Omega_t$ to a fixed point.

Denote the average integral of the Gauss curvature by
\begin{equation}\label{s6.0}
	\bar{K}=\frac{1}{|M_t|}\int_{M_t}{K d\mu_t}=\frac{\mathcal{A}_n(\Omega_t)+\frac{1}{n-1}\mathcal{A}_{n-2}(\Omega_t)}{\mathcal{A}_0(\Omega_t)},
\end{equation}
where the second equality is due to \eqref{eq-VW}. It follows from the monotonicity \eqref{s2.Akmo} of quermassintegrals with respect to inclusion of convex sets and the estimates on inner radius and outer radius in Lemma \ref{Lemma-ioradius} that there exists uniform positive constants $m_1, m_2, a$ and $b$ depending only on $n$ and $M_0$, such that
\begin{equation}\label{In-lubbK}
  0<m_1\leq |M_t|=A_0(\Omega_t)\leq m_2,\quad   a\leq\bar{K}\leq b.	
\end{equation}

\subsection{Monotonicity for $\mathcal{A}_{n-1}$}
\begin{lem}\label{lemmono}
Let $M_t$ be a smooth convex solution of the volume preserving flow \eqref{flow-VMCF}. Denote $\Omega_t$ the domain enclosed by $M_t$. Then $\mathcal{A}_{n-1}(\Omega_t)$ is monotone decreasing in time $t$, which is strictly decreasing unless $\Omega_t$ is a geodesic ball.
\end{lem}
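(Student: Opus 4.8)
The plan is to compute the time derivative of $\mathcal{A}_{n-1}(\Omega_t)$ using the variational formula \eqref{eqWk} with $k=n-1$, and then show that the resulting quantity is nonpositive by combining it with the volume-preserving constraint and an appropriate geometric inequality. Concretely, since the flow \eqref{flow-VMCF} is a normal variation with velocity $\eta = \phi(t)-f(K)$, formula \eqref{eqWk} gives
\begin{equation*}
	\frac{d}{dt}\mathcal{A}_{n-1}(\Omega_t) = n\int_{M_t}(\phi(t)-f(K))\,\sigma_n\,d\mu_t = n\int_{M_t}(\phi(t)-f(K))K\,d\mu_t.
\end{equation*}
At the same time, the volume constraint $\frac{d}{dt}|\Omega_t| = \frac{d}{dt}\mathcal{A}_{-1}(\Omega_t) = \int_{M_t}(\phi(t)-f(K))\,d\mu_t = 0$, which is exactly the defining property of $\phi(t)$ in \eqref{eqphi}. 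So I would write $\phi(t) = \frac{1}{|M_t|}\int_{M_t}f(K)\,d\mu_t$ and interpret the integral above as a covariance:
\begin{equation*}
	\frac{d}{dt}\mathcal{A}_{n-1}(\Omega_t) = n\int_{M_t}\big(\phi(t)-f(K)\big)\big(K-\bar{K}\big)\,d\mu_t,
\end{equation*}
where the extra $\bar K$ is inserted freely because $\int_{M_t}(\phi(t)-f(K))\,d\mu_t=0$. Then $\int_{M_t}(\phi(t)-f(K))(K-\bar K)\,d\mu_t = -\int_{M_t}(f(K)-\phi(t))(K-\bar K)\,d\mu_t$; since $\phi(t)$ is the average of $f(K)$ over $M_t$ with the same measure, this equals $-\mathrm{Cov}(f(K),K)$, which is $\leq 0$ because $f$ is increasing (item \eqref{p1} of Assumption \ref{ass}): $(f(K(x))-f(K(y)))(K(x)-K(y))\geq 0$ pointwise, so integrating over $M_t\times M_t$ against $d\mu_t\otimes d\mu_t$ gives $2|M_t|\,\mathrm{Cov}(f(K),K)\geq 0$. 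Hence $\frac{d}{dt}\mathcal{A}_{n-1}(\Omega_t)\leq 0$.

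For the rigidity statement, equality in the covariance inequality forces $f(K(x))=f(K(y))$ whenever... more precisely, $\mathrm{Cov}(f(K),K)=0$ together with $f$ strictly increasing on $(0,\infty)$ forces $K$ to be constant on $M_t$ (the pointwise integrand $(f(K(x))-f(K(y)))(K(x)-K(y))$ is nonnegative and vanishes a.e., and strict monotonicity of $f$ makes it strictly positive wherever $K(x)\neq K(y)$, so $K$ is a.e. constant, hence constant by smoothness). A smooth closed convex hypersurface in $\mathbb{H}^{n+1}$ with constant Gauss curvature must be a geodesic sphere — this is the Alexandrov-type soul rigidity; I would cite the hyperbolic Alexandrov theorem / the result that constant $\sigma_n$ on a closed convex hypersurface forces a geodesic sphere (or reduce to it via the Gauss curvature being a constant and convexity, using e.g. the Alexandrov reflection / moving plane argument which the paper already invokes elsewhere). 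Thus $\mathcal{A}_{n-1}$ is strictly decreasing unless $\Omega_t$ is a geodesic ball.

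The main obstacle, and the only genuinely nontrivial point, is the rigidity case: asserting that constant Gauss curvature on a closed convex hypersurface in hyperbolic space implies a geodesic sphere. The monotonicity itself is a short computation once one notices the covariance trick and uses only that $f$ is increasing — no use of items \eqref{p2}–\eqref{p4} of Assumption \ref{ass} is needed here. I would double-check the constant in \eqref{eqWk} for $k=n-1$ (it should be $n\int_{M_t}\eta\sigma_n$) and make sure the sign convention for the outward normal matches, so that a decreasing $\mathcal{A}_{n-1}$ is consistent with the flow contracting toward a sphere. For the rigidity I would either invoke a clean reference for the hyperbolic Alexandrov theorem or, if one wants to stay self-contained, note that by the defining relation \eqref{eq-VW} constancy of $K=\sigma_n$ plus the isoperimetric-type characterization of quermassintegrals pins down the convex body; but the cleanest route is simply to cite it, as the paper's later sections already rely on similar rigidity inputs.
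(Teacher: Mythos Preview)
Your proof is correct and follows essentially the same route as the paper: compute $\frac{d}{dt}\mathcal{A}_{n-1}$ via the variational formula \eqref{eqWk}, show nonpositivity using only the monotonicity of $f$, and invoke the Alexandrov-type theorem for the equality case. The one cosmetic difference is that the paper inserts the constant $f(\bar K)$ (rather than running your covariance/double-integral argument) to write the derivative as $-n\int_{M_t}(f(K)-f(\bar K))(K-\bar K)\,d\mu_t$, whose integrand is pointwise nonnegative, and then cites Montiel--Ros \cite{MS91} for the rigidity of closed hypersurfaces with constant Gauss curvature in $\mathbb{H}^{n+1}$.
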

\proof
From the evolution equation \eqref{eqWk} for the quermassintegrals of $\Omega_t$, we have
\begin{equation*}
	\frac{d}{dt}\mathcal{A}_{n-1}(\Omega_t)=n\int_{M_t}{K(\phi(t)-f(K))d\mu_t}.
\end{equation*}
Since $\phi(t)$ is defined as in \eqref{eqphi}, we have
\begin{align}
	\frac{d}{dt}\mathcal{A}_{n-1}(\Omega_t)&=\frac{n}{|M_t|}\left(\int_{M_t}K d\mu_t\int_{M_t}f(K) d\mu_t-|M_t|\int_{M_t}Kf(K) d\mu_t\right)\notag\\
	&=n\int_{M_t}{f(K)(\bar{K}-K)\,d\mu_t}\notag\\
	&=-n\int_{M_t}{(f(K)-f(\bar{K})(K-\bar{K}))}\,d\mu_t\leq 0\label{eqWmo}
\end{align}
due to the assumption that $f$ is strictly increasing. Moreover, equality holds in \eqref{eqWmo} at some time $t$ if and only if $K$ is a constant on $M_t$, which means that $M_t$ is a geodesic sphere by the Alexandrov type theorem for hypersurfaces with constant Gauss curvature in the hyperbolic space (see \cite{MS91}).
\endproof

\subsection{Subsequential Hausdorff convergence}
We first prove the following estimate on the $L^1$ oscillation decay of Gauss curvature:
\begin{lem}\label{Lem-subsec-6.2-1}
	Let $M_0$ be a smooth closed and convex hypersurface in $\mathbb{H}^{n+1}$ and $M_t$ be the smooth solution of the flow \eqref{flow-VMCF} starting from $M_0$. Then there exists a sequence of times $\{t_i\},t_i\to\infty$, such that
	\begin{equation}\label{s6.2-1}
		\int_{M_{t_i}}{|K-\bar{K}|d\mu_{t_i}}\to 0,\quad \text{as}\,\, t_i\to\infty.
	\end{equation}
\end{lem}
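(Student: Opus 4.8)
The plan is to exploit the monotonicity of $\mathcal{A}_{n-1}(\Omega_t)$ established in Lemma \ref{lemmono} together with its uniform boundedness. First I would observe that since $\Omega_t$ has inner and outer radius controlled by Lemma \ref{Lemma-ioradius}, the value $\mathcal{A}_{n-1}(\Omega_t)$ is bounded below (for instance by $\mathcal{A}_{n-1}$ of the inball, using monotonicity \eqref{s2.Akmo} under inclusion). Being monotone decreasing and bounded below, $\mathcal{A}_{n-1}(\Omega_t)$ converges to a finite limit as $t\to\infty$, hence its time derivative is integrable in time: from \eqref{eqWmo},
\begin{equation*}
	\int_0^\infty n\int_{M_t}\bigl(f(K)-f(\bar{K})\bigr)(K-\bar{K})\,d\mu_t\,dt=\mathcal{A}_{n-1}(\Omega_0)-\lim_{t\to\infty}\mathcal{A}_{n-1}(\Omega_t)<\infty.
\end{equation*}
Therefore there is a sequence $t_i\to\infty$ along which $\int_{M_{t_i}}\bigl(f(K)-f(\bar{K})\bigr)(K-\bar{K})\,d\mu_{t_i}\to 0$.

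Next I would convert this decay of the "$f$-weighted" oscillation into decay of the $L^1$ oscillation $\int_{M_t}|K-\bar{K}|\,d\mu_t$. The point is that on the region where $K$ is not too small and not too large, the factor $\bigl(f(K)-f(\bar{K})\bigr)/(K-\bar{K})$ is bounded below by a positive constant, so there $|K-\bar{K}|^2$ is controlled by the integrand above. To make this uniform I need two-sided bounds on $K$ along the sequence $t_i$: the uniform lower bound $K\ge a>0$ follows from $\bar{K}\ge a$ in \eqref{In-lubbK} combined with... actually one must be careful, since the a priori curvature bounds of Proposition \ref{preserve convex} and Proposition \ref{propKupp} degenerate as $t\to\infty$. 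The clean way is to argue pointwise: split $M_{t_i}$ into $\{K\le 2\bar{K}\}$ and $\{K>2\bar{K}\}$. On the first set, by the mean value theorem and monotonicity of $f'$, $f(K)-f(\bar{K})\ge c_0(K-\bar{K})$ for $K\in[\bar{K},2\bar{K}]$ with $c_0=\min_{[a,2b]}f'>0$ (using \eqref{In-lubbK}), and symmetrically $f(\bar{K})-f(K)\ge c_0(\bar{K}-K)$ for $K\le\bar{K}$; hence $\int_{\{K\le 2\bar{K}\}}|K-\bar{K}|\,d\mu_{t_i}\le c_0^{-1}\int_{M_{t_i}}(f(K)-f(\bar{K}))(K-\bar{K})\,d\mu_{t_i}+ (\text{a tail term controlled by the }\{K\le\bar K\}\text{ part})$, which I would organize more carefully so that only the good integrand appears. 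For the second set $\{K>2\bar{K}\}$ I would use that $\int_{M_{t_i}}K\,d\mu_{t_i}=\bar{K}|M_{t_i}|$ is uniformly bounded, so $\int_{\{K>2\bar K\}}(K-\bar K)\,d\mu_{t_i}\le \int_{\{K>2\bar K\}}K\,d\mu_{t_i}$, and on this set $K-\bar K> K/2$, giving $|M_{t_i}\cap\{K>2\bar K\}|\le 2\bar{K}^{-1}\cdot(\text{small})$; more to the point, on $\{K>2\bar K\}$ we have $f(K)-f(\bar K)\ge f(2\bar K)-f(\bar K)>0$, so $(f(K)-f(\bar K))(K-\bar K)\ge \delta_0 (K-\bar K)$ with $\delta_0=\inf_{[a,b]}(f(2s)-f(s))>0$, whence $\int_{\{K>2\bar K\}}|K-\bar K|\,d\mu_{t_i}\le\delta_0^{-1}\int_{M_{t_i}}(f(K)-f(\bar K))(K-\bar K)\,d\mu_{t_i}$. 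Combining the two regions gives $\int_{M_{t_i}}|K-\bar K|\,d\mu_{t_i}\le C\int_{M_{t_i}}(f(K)-f(\bar K))(K-\bar K)\,d\mu_{t_i}\to 0$.

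The main obstacle I expect is precisely the last conversion step: one wants to bound $|K-\bar K|$ linearly by the integrand $(f(K)-f(\bar K))(K-\bar K)$, but near $K=\bar K$ this quotient behaves like $f(K)-f(\bar K)\sim f'(\bar K)(K-\bar K)$, so the integrand is quadratically small there and gives no linear control pointwise. The resolution is that one does \emph{not} need a pointwise linear bound: on the "near" region $\{|K-\bar K|\le 1\}$ one writes $|K-\bar K|\le |K-\bar K|^2 + \mathbf 1_{\{|K-\bar K|>1\}}$ is wrong; instead use $|K-\bar K|\le \frac{f(K)-f(\bar K)}{f'(\bar K)}$... Let me instead just split at a fixed threshold: on $\{\,|K-\bar K|\le\varepsilon\,\}$ estimate crudely $\int |K-\bar K|\le \varepsilon|M_{t_i}|\le \varepsilon m_2$, and on $\{\,|K-\bar K|>\varepsilon\,\}$ use that there the quotient $(f(K)-f(\bar K))/(K-\bar K)$ is bounded below by a positive constant $c(\varepsilon)$ depending only on $\varepsilon, a, b$ and $f$ (by the monotonicity and convexity-type condition \eqref{p4}, together with the uniform range $K\in(0,\infty)$ but $\bar K\in[a,b]$; for large $K$ one uses \eqref{p1} and \eqref{p2} to see $f(K)-f(\bar K)\to\infty$ while we may also just use $K-\bar K<K$ and the uniform bound on $\int K\,d\mu$). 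Thus $\int_{M_{t_i}}|K-\bar K|\,d\mu_{t_i}\le \varepsilon m_2 + c(\varepsilon)^{-1}\int_{M_{t_i}}(f(K)-f(\bar K))(K-\bar K)\,d\mu_{t_i}$; letting $t_i\to\infty$ along the good sequence and then $\varepsilon\to 0$ finishes the proof. Writing $c(\varepsilon)$ carefully (handling both the regime $\bar K+\varepsilon\le K$ bounded and $K$ large) is the only genuinely delicate bookkeeping.
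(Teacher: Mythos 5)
Your proposal is correct, and the first half (integrating \eqref{eqWmo} in time, using that $\mathcal{A}_{n-1}(\Omega_t)$ is monotone and bounded below, and extracting a subsequence along which $\int_{M_{t_i}}(f(K)-f(\bar K))(K-\bar K)\,d\mu_{t_i}\to 0$) is exactly the paper's first step. The conversion to $L^1$ decay of $K-\bar K$ is where you diverge. The paper applies Cauchy--Schwarz to get
\begin{equation*}
\int_{M_t}|K-\bar K|\,d\mu_t\le\Bigl(\int_{M_t\setminus Y_t}\tfrac{|K-\bar K|}{|f(K)-f(\bar K)|}\,d\mu_t\Bigr)^{1/2}\Bigl(\int_{M_t}(f(K)-f(\bar K))(K-\bar K)\,d\mu_t\Bigr)^{1/2},
\end{equation*}
and then shows the weight $|K-\bar K|/|f(K)-f(\bar K)|$ has uniformly bounded integral by splitting into $\{K\le a/2\}$, $\{a/2<K<3b/2\}$ (where the mean value theorem bounds the weight by $1/\min f'$) and $\{K\ge 3b/2\}$ (where the weight is at most $K/(f(3b/2)-f(b))$ and $\int K\,d\mu=\bar K|M_t|\le m_2b$ is used). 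This sidesteps the near-diagonal degeneracy entirely and yields a quantitative square-root rate. You instead seek a \emph{linear} pointwise bound of $|K-\bar K|$ by the integrand, which forces the $\varepsilon$-threshold split and a final diagonal limit ($t_i\to\infty$, then $\varepsilon\to 0$); this is less quantitative but perfectly adequate for the qualitative statement \eqref{s6.2-1}. One caution: your claim that $(f(K)-f(\bar K))/(K-\bar K)\ge c(\varepsilon)$ on all of $\{|K-\bar K|>\varepsilon\}$ is false for admissible sublinear speeds such as $f(x)=\sqrt x$, since the difference quotient tends to $0$ as $K\to\infty$; however, your own $\delta_0$-argument from the second paragraph (on $\{K>2\bar K\}$ one has $f(K)-f(\bar K)\ge\inf_{s\in[a,b]}(f(2s)-f(s))=\delta_0>0$, hence $(f(K)-f(\bar K))(K-\bar K)\ge\delta_0|K-\bar K|$ there) repairs this, and is the same mechanism the paper uses on $Z_3$ in a multiplicative rather than additive form. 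With that three-region bookkeeping ($|K-\bar K|\le\varepsilon$; $\varepsilon<|K-\bar K|$ with $K\le 2\bar K$; $K>2\bar K$) made explicit, your argument is complete.
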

\begin{proof}
	By the evolution equation \eqref{eqWmo} and the long time existence of the flow \eqref{flow-VMCF}, we have
	\begin{equation*}
		n\int_0^{\infty}\int_{M_t}{(f(K)-f(\bar{K}))(K-{\bar{K}})d\mu_t}dt\leq {\mathcal{A}_{n-1}}(\Omega_0)<\infty.
	\end{equation*}
	Therefore there exists a sequence of times $t_i\to\infty$ such that
	\begin{equation}\label{s6.2-2}
		\int_{M_{t_i}}{(f(K)-f(\bar{K}))(K-{\bar{K}})d\mu_{t_i}}\to 0.
	\end{equation}
Denote the subset $Y_t\subset M_t$ as $Y_t=\{p\in M_t|K(p)=\bar{K}\}$. Then we have
\begin{align}
	\int_{M_t}{|K-\bar{K}|}\,d\mu_t&=\int_{M_t\setminus Y_t}{\frac{|K-\bar{K}|^{\frac{1}{2}}}{|f(K)-f(\bar{K})|^{\frac{1}{2}}}|K-\bar{K}|^{\frac{1}{2}}|f(K)-f(\bar{K})|^{\frac{1}{2}}}\,d\mu_t\notag\\
	&\leq \underbrace{\left(\int_{M_t\setminus Y_t}{\frac{|K-\bar{K}|}{|f(K)-f(\bar{K})|}}\,d\mu_t\right)^{\frac{1}{2}}}_{(I)}\left(\int_{M_t}{(f(K)-f(\bar{K}))(K-{\bar{K}})d\mu_t}\right)^{\frac{1}{2}}.\label{In-KKbar}
\end{align}

Next, we show that the term $(I)$ of \eqref{In-KKbar} is uniformly bounded from above. We divide the set $M_t\setminus Y_t$ into three disjoint subsets
\begin{equation*}
	M_t\setminus Y_t=Z_1\sqcup Z_2\sqcup Z_3,
\end{equation*}
where the subsets $Z_1, Z_2$ and $Z_3$ are defined as
\begin{align}
	Z_1&=\{p\in M_t\setminus Y_t|0<K(p)\leq\frac{a}{2}\},\label{dn-T_1}\\
	Z_2&=\{p\in	M_t\setminus Y_t|\frac{a}{2}< K(p)< \frac{3b}{2}\},\label{dn-T_2}\\
	Z_3&=\{p\in	M_t\setminus Y_t|K(p)\geq\frac{3b}{2}\}.\label{dn-T_3}
\end{align}
Using \eqref{In-lubbK}, we calculate as follows:
\begin{itemize}
	\item[(i)] If $p\in Z_1$, since $f$ is an increasing function of $K$, we have
	\begin{equation*}
		\frac{|K-\bar{K}|}{|f(K)-f(\bar{K})|}=\frac{\bar{K}-K}{f(\bar{K})-f(K)}\leq \frac{b}{f(a)-f(\frac{a}{2})}
	\end{equation*}
and hence
   \begin{equation}\label{sec6-case-1}
   	\int_{Z_1}{\frac{|K-\bar{K}|}{|f(K)-f(\bar{K})|}}\,d\mu_t\leq \frac{m_2b}{f(a)-f(\frac{a}{2})}.
   \end{equation}
  \item[(ii)] If $p\in Z_2$, then by Lagrange's mean value theorem, we have
  \begin{align*}
  	\frac{|K-\bar{K}|}{|f(K)-f(\bar{K})|}&=\frac{1}{f'(\xi)},\quad \text{the value $\xi$ is taken between $K$ and $\bar{K}$}\\
  	&\leq \max_{x\in[\frac{a}{2},\frac{3b}{2}]}\left\{\frac{1}{f'(x)}\right\}=:c
  \end{align*}
and hence
\begin{equation}\label{sec6-case-2}
	\int_{Z_2}{\frac{|K-\bar{K}|}{|f(K)-f(\bar{K})|}}\,d\mu_t\leq m_2c.
\end{equation}
   \item[(iii)] If $p\in Z_3$, we have
   \begin{equation*}
   	\frac{|K-\bar{K}|}{|f(K)-f(\bar{K})|}=\frac{K-\bar{K}}{f(K)-f(\bar{K})}\leq\frac{K}{f(\frac{3b}{2})-f(b)}
   \end{equation*}
and hence
\begin{align}\label{sec6-case-3}
	\int_{Z_3}{\frac{|K-\bar{K}|}{|f(K)-f(\bar{K})|}}\,d\mu_t&\leq \frac{1}{f(\frac{3b}{2})-f(b)}\int_{M_t}{K}\,d\mu_t\notag\\
	&=\frac{|M_t|\bar{K}}{f(\frac{3b}{2})-f(b)}\notag\\
	&\leq\frac{m_2b}{f(\frac{3b}{2})-f(b)}.
\end{align}
\end{itemize}
Combining \eqref{sec6-case-1}-\eqref{sec6-case-3}, we conclude that the term $(I)$ of \eqref{In-KKbar} is uniformly bounded from above. Then by \eqref{s6.2-2}, we complete the proof of Lemma \ref{Lem-subsec-6.2-1}.
\end{proof}

With the estimate \eqref{s6.2-1} in hand, we can argue as in \cite[Lemma 6.3]{WYZ2022} using curvature measure theory for convex bodies in $\mathbb{H}^{n+1}$ to get the subsequential Hausdorff convergence of $M_t$ to a geodesic sphere. We state the result in the following lemma and refer the readers to \cite[Lemma 6.3]{WYZ2022} for the proof.

\begin{lem}\label{subcon}
	Let $M_0$ be a smooth, closed convex hypersurface in $\mathbb{H}^{n+1}$ and $M_t$ be the smooth solution of the flow \eqref{flow-VMCF} starting from $M_0$. Then there exists a sequence of times $\{t_i\},t_i\to\infty$, such that $M_{t_i}$ converges to a geodesic sphere $S_{\rho_{\infty}}(p)$ in Hausdorff sense as $t_i\to\infty$, where $p$ is the center of the sphere and the radius $\rho_{\infty}$ is determined by the fact that $S_{\rho_{\infty}}(p)$ encloses the same volume of $M_0$.
\end{lem}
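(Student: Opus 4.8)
The plan is to follow \cite[Lemma 6.3]{WYZ2022}: extract a Hausdorff--convergent subsequence of $\{\Omega_{t_i}\}$ by compactness, then use the $L^1$ oscillation estimate \eqref{s6.2-1} together with the weak continuity of curvature measures for convex bodies in $\mathbb{H}^{n+1}$ to identify the limit body as a geodesic ball, and finally use volume preservation to fix its radius.

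First I would pass to a good subsequence. By Lemma~\ref{Lemma-ioradius} the convex bodies $\Omega_{t_i}$ all lie inside a fixed geodesic ball and have inradius bounded below by $c_1>0$; by the Blaschke selection theorem in $\mathbb{H}^{n+1}$, after relabelling we may assume $\Omega_{t_i}\to\Omega_\infty$ in the Hausdorff distance for some convex body $\Omega_\infty$, and the uniform lower inradius bound passes to the limit so that $\Omega_\infty$ has nonempty interior. Since the volume $\mathcal{A}_{-1}=|\cdot|$ is continuous with respect to Hausdorff convergence and $|\Omega_{t_i}|=|\Omega_0|$ along the flow, we get $|\Omega_\infty|=|\Omega_0|$; and since $\bar K$ along $M_{t_i}$ lies in the compact interval $[a,b]$ by \eqref{In-lubbK}, after a further subsequence we may assume $\bar K\to\bar K_\infty\in[a,b]$.

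Next I would pass the defining identity to the limit. The surface area element $\mathrm d\mu_{t_i}$ and the Gauss curvature measure $K\,\mathrm d\mu_{t_i}$, viewed as finite Radon measures on $\mathbb{H}^{n+1}$ supported on $M_{t_i}=\partial\Omega_{t_i}$, coincide up to fixed normalizing constants with two of the curvature measures of $\Omega_{t_i}$; by the weak continuity of curvature measures under Hausdorff convergence, $\mathrm d\mu_{t_i}\rightharpoonup\mu_\infty$ and $K\,\mathrm d\mu_{t_i}\rightharpoonup\Phi_\infty$, where $\mu_\infty$ and $\Phi_\infty$ are the corresponding curvature measures of $\Omega_\infty$. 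For any $\psi\in C^0(\mathbb{H}^{n+1})$,
\[
\Bigl|\int_{M_{t_i}}\psi K\,\mathrm d\mu_{t_i}-\bar K\int_{M_{t_i}}\psi\,\mathrm d\mu_{t_i}\Bigr|\le\|\psi\|_{C^0}\int_{M_{t_i}}|K-\bar K|\,\mathrm d\mu_{t_i}\longrightarrow 0
\]
by \eqref{s6.2-1}, so letting $i\to\infty$ yields $\int\psi\,\mathrm d\Phi_\infty=\bar K_\infty\int\psi\,\mathrm d\mu_\infty$ for every such $\psi$, i.e.\ $\Phi_\infty=\bar K_\infty\,\mu_\infty$ as measures on $\mathbb{H}^{n+1}$.

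Finally I would invoke rigidity: a convex body in $\mathbb{H}^{n+1}$ whose Gauss curvature measure is a constant multiple of its surface area measure must be a geodesic ball, which is the measure-theoretic form of the Alexandrov theorem for hypersurfaces of constant Gauss curvature in $\mathbb{H}^{n+1}$ (cf.\ \cite{MS91} and the curvature measure discussion in \cite{WYZ2022}). Hence $\Omega_\infty=B_{\rho_\infty}(p)$ for some $p\in\mathbb{H}^{n+1}$, with $\rho_\infty$ determined by $|B_{\rho_\infty}(p)|=|\Omega_\infty|=|\Omega_0|$, and passing from bodies to boundaries (legitimate since the interior is nonempty) gives $M_{t_i}\to S_{\rho_\infty}(p)$ in the Hausdorff sense. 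The hard part is precisely the two inputs from hyperbolic curvature measure theory borrowed from \cite{WYZ2022}: the weak continuity of the curvature measures under Hausdorff convergence, which is what lets us read the weak limit of $K\,\mathrm d\mu_{t_i}$ as an intrinsic measure of $\Omega_\infty$, and the rigidity statement that forces $\Omega_\infty$ to be a geodesic ball once $\Phi_\infty=\bar K_\infty\mu_\infty$.
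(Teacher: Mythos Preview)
Your proposal is correct and follows exactly the route the paper takes: the paper itself gives no argument here but simply defers to \cite[Lemma~6.3]{WYZ2022}, and your outline (Blaschke selection, weak continuity of curvature measures, passing \eqref{s6.2-1} to the limit, then rigidity) is precisely that argument. One small caveat worth keeping in mind when you write this up carefully: in $\mathbb{H}^{n+1}$ the integrand $K\,\mathrm d\mu$ is not a single curvature measure but a fixed linear combination of them (cf.\ \eqref{s6.0} and \eqref{eq-VW}), so the weak continuity you need is for that linear combination---this changes nothing in the logic, but your phrase ``two of the curvature measures'' slightly understates the hyperbolic bookkeeping.
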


\subsection{Convergence of the center of the inner ball}As in \cite{WYZ2022}, since we do not have the analogous stability estimate as in \cite[Eq.(7.124)]{RS2014} for the hyperbolic case, we can not apply the argument in \cite{AW21} to deduce from Lemma \ref{subcon} the Hausdorff convergence of $M_t$ to the geodesic sphere for all time $t\to \infty$. However, if we denote $p_t$ as the center of the inner ball of $\Omega_t$, we can still prove that $p_t$ converges to the fixed point $p\in \mathbb{H}^{n+1}$ for all time $t\to\infty$ using the Alexandrov reflection and the subsequential Hausdorff convergence of $M_t$ in Lemma \ref{subcon}.

Let $p\in \mathbb{H}^{n+1}$ be the center of the limit geodesic sphere $S_{\rho_{\infty}}(p)$ in Lemma \ref{subcon}. Take an arbitrary direction $z\in T_p\mathbb{H}^{n+1}$. Let $\gamma_z$ be the normal geodesic line (i.e. $|\gamma'|=1$) through the point $p$ with $\gamma_z(0)=p$ and $\gamma'_z(0)=z$, and let $H_{z,s}$ be the totally geodesic hyperplane in $\mathbb{H}^{n+1}$ that is perpendicular to $\gamma_z$ at $\gamma_z(s), s\in\mathbb{R}$. We use the notation $H_{z,s}^{+}$ and $H_{z,s}^{-}$ for the half-spaces in $\mathbb{H}^{n+1}$ determined by $H_{z,s}$ as follows:
\begin{equation*}
	H_{z,s}^{+}:=\bigcup_{s'\geq s}H_{z,s'},\qquad  H_{z,s}^{-}:=\bigcup_{s'\leq s}H_{z,s'}.
\end{equation*}
For a bounded domain $\Omega$ in $\mathbb{H}^{n+1}$, denote
\begin{equation*}
	\Omega_z^{+}(s)=\Omega\cap H_{z,s}^{+},\qquad \Omega_z^{-}(s)=\Omega\cap H_{z,s}^{-}.
\end{equation*}
The reflection map across $H_{z,s}$ is denoted by $R_{\gamma_z,s}$. We define
\begin{align*}
	S_{\gamma_z}^{+}(\Omega)&:=\inf\{s\in\mathbb{R}~|~R_{\gamma_z,s}(\Omega_z^{+}(s))\subset\Omega_z^{-}(s)\},\\
	S_{\gamma_z}^{-}(\Omega)&:=\sup\{s\in\mathbb{R}~|~R_{\gamma_z,s}(\Omega_z^{-}(s))\subset\Omega_z^{+}(s)\}.
\end{align*}
The Alexandrov reflection argument implies that $S_{\gamma_z}^{+}(\Omega_t)$ is non-increasing in $t$ for each $z$ (see \cite[Lemma 6.1]{BenWei}). By the definitions of $S_{\gamma_z}^{+}(\Omega_t)$ and $S_{\gamma_z}^{-}(\Omega_t)$, we have $S_{\gamma_z}^{-}(\Omega_t)\leq S_{\gamma_z}^{+}(\Omega_t)$. Since $S_{\gamma_z}^{-}(\Omega_t)=-S_{\gamma_{-z}}^{+}(\Omega_t)$, we also have that $S_{\gamma_z}^{-}(\Omega_t)$ is non-decreasing in $t$ for each $z$. Note that the paper \cite{BenWei} deals with the flow with $h$-convex initial hypersurfaces, the argument in Lemma 6.1 of \cite{BenWei} works for convex solutions as well. The readers may refer to \cite{Chow97,CG96,Chow23} for more details on applications of the Alexandrov reflection method in extrinsic curvature flows.

The subsequential Hausdorff convergence in Lemma \ref{subcon} implies that there exists a sequence of  $d_i\rightarrow 0$ such that $M_{t_i}\subset B_{\rho_{\infty}+d_i}(p)/ B_{\rho_{\infty}-d_i}(p)$. Take an arbitrary direction $z\in T_p{\mathbb{H}^{n+1}}$, we have shown in  \cite{WYZ2022}*{Lemma 6.6} that 
\begin{align}\label{s5.refl2-0}
    -C\sqrt{d_i}\leq S_{\gamma_z}^{-}(\Omega_{t_i})\leq S_{\gamma_z}^{+}(\Omega_{t_i})\leq C\sqrt{d_i}
\end{align}
for some $C=C(\rho_\infty)$. The monotonicity of $S_{\gamma_z}^{+}$ and $S_{\gamma_z}^{-}$ then implies 
\begin{align}\label{s5.refl2}
    -C\sqrt{d_i}\leq S_{\gamma_z}^{-}(\Omega_t)\leq S_{\gamma_z}^{+}(\Omega_{t})\leq C\sqrt{d_i},\qquad \forall t\geq t_i.
\end{align}

Let $x_1, x_2\in M_t$ be the points such that $d(x_1,p)=\max_{x\in M_t}d(x,p)$ and $d(x_2,p)=\min_{x\in M_t}d(x,p)$. There exists a geodesic $\gamma_z$ passing through $p$ (with $\gamma_z(0)=p$) and totally geodesic hyperplane $H_{z,C\sqrt{d_i}}$ orthogonal to both $\gamma_z$ and the geodesic connecting $x_1$ to  $x_2$. Using $H_{z,C\sqrt{d_i}}$ as the reflecting hyperplane and noting \eqref{s5.refl2}, we can estimate that $\max_{x\in M_t}d(x,p)-\min_{x\in M_t}d(x,p)\leq Cd_i^{1/4}$ for any $t\geq t_i$. This implies that 
\begin{align}\label{s5.distH}
    \text{d}_\mathcal{H}(\partial\Omega_t, \partial B_{\rho_{-}(t)}(p_t))\leq Cd_i^{1/4},\qquad \forall~t\geq t_i,
\end{align}
where we denote $p_t$ as the center of an inner ball $B_{\rho_{-}(t)}(p_t)$ of $\Omega_t$.  

Consider another geodesic (still denoted by $\gamma_z$) which passes through $p$ and $p_t$ with $\gamma_z(0)=p$ and $\gamma_z(s_t)=p_t$.  We claim that 
\begin{align}\label{s5.refl1}
    S_{\gamma_z}^{-}(\Omega_t)-\text{d}_\mathcal{H}(\partial\Omega_t, \partial B_{\rho_{-}(t)}(p_t))\leq~s_t\leq ~S_{\gamma_z}^{+}(\Omega_t)+\text{d}_\mathcal{H}(\partial\Omega_t, \partial B_{\rho_{-}(t)}(p_t)).
\end{align}
In fact, denote $S_{\gamma_z}^{+}(\Omega_t)$ by $\bar{s}_t$. If $p_t\in H^-_{z,\bar{s}_t}$, we obviously have $s_t\leq S_{\gamma_z}^{+}(\Omega_t)$;  If $p_t\in H^+_{z,\bar{s}_t}$, we have $B_{\rho_{-}(t)}(p'_t):=R_{\gamma_z,\bar{s}_t}(B_{\rho_{-}(t)}(p_t))\subset\Omega_t$. It follows that
\begin{align*}
 2(s_t-S_{\gamma_z}^{+}(\Omega_t))= &\text{d}_\mathcal{H}(\partial B_{\rho_{-}(t)}(p'_t), \partial B_{\rho_{-}(t)}(p_t))\\
 \leq &  2\text{d}_\mathcal{H}(\partial\Omega_t, \partial B_{\rho_{-}(t)}(p_t)).
\end{align*}
This proves the second inequality of \eqref{s5.refl1}. The proof of the first inequality of \eqref{s5.refl1} is similar. 

Finally, combining \eqref{s5.refl2} -- \eqref{s5.refl1}, we have
\begin{align*}
    d(p_t,p)=|s_t|\leq &~Cd_i^{1/2}+Cd_i^{1/4},\qquad \forall~t\geq t_i,
\end{align*}
which implies that $d(p_t,p)\to 0$ as $t\to\infty$.

\section{Smooth convergence}\label{Sec-main proof}
In this section, we complete the proof of Theorem \ref{theo}. Firstly, we prove the following uniform estimate for the principal curvatures of $M_t$ along the flow \eqref{flow-VMCF}.
\begin{lem}\label{uni}
	Let $M_0$ be a smooth, closed and convex hypersurface in $\mathbb{H}^{n+1}$, and $M_t$ be the smooth solution of the flow \eqref{flow-VMCF} starting from $M_0$. Then there exists constants $\underline{\kappa}$, $\overline{\kappa}$ depending only on $n, M_0$ and $\Theta$ such that the principal curvatures $\kappa_i$ of $M_t$ satisfy:
	\begin{equation}
		\underline{\kappa}\leq \kappa_i\leq \overline{\kappa},\qquad i=1,\dots,n
	\end{equation}
	for all time $t\in [0,+\infty)$, where $\Theta$ is the constant in item \eqref{p3} of Assumption \ref{ass}.
\end{lem}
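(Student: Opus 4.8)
The plan is to reduce the uniform two-sided curvature bound to a uniform positive lower bound on the principal curvatures, and then to obtain that lower bound by re-running the auxiliary function argument of Proposition \ref{preserve convex}, but this time on a single long time interval anchored at one fixed reference point, so that the iteration-induced factor $\mathrm{e}^{(2t/\tau+2)\Lambda_1}$ never enters.

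First I would note that a uniform lower bound already forces the rest. In the proof of Proposition \ref{propKupp} the final time $T$ entered only through \eqref{s3.sigman-1}, where the time-dependent estimate \eqref{s4.2-0} was used to bound $\sigma_{n-1}(\kappa)\le nK(\min_i\kappa_i)^{-1}$. If instead $\kappa_i\ge\underline{\kappa}>0$ for all $t$, then $\sigma_{n-1}(\kappa)\le nK\underline{\kappa}^{-1}$, the constant $\bar c$ in that proof becomes independent of time, and Tso's argument yields a uniform bound $K\le C$; combined with $\kappa_i=K\big(\prod_{j\ne i}\kappa_j\big)^{-1}\le C\underline{\kappa}^{-(n-1)}$ this gives the uniform upper bound $\overline{\kappa}$. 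Hence everything reduces to the uniform lower bound.

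For the lower bound I would invoke Lemma \ref{innercon}: since the center $p_t$ of the inball of $\Omega_t$ converges to the fixed point $p$, and $\rho_-(t)\ge c_1$ by Lemma \ref{Lemma-ioradius}, there is a time $T_0$ with $d(p_t,p)<c_1/2$, hence $B_{c_1/2}(p)\subset\Omega_t$, for all $t\ge T_0$. Together with the outer radius bound, the argument of \S\ref{sec3} applies verbatim with the single fixed point $p$ in place of the moving inball centers: the support and radial functions $u_p,\rho_p$ with respect to $p$ satisfy $2c\le u_p\le C_0$, $\rho_p\ge c_1/2$ as in \eqref{u-bound} and $1-|\nabla\rho_p|^2\ge c_3$ as in \eqref{s3.nablarho}, now on $M_t$ for all $t\ge T_0$ rather than only on windows of length $\tau$. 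Therefore the maximum principle for $Q=\log\mathfrak{b}+A\rho_p$ with $A=(n+\Theta)/c_3$ can be run directly on $M\times[T_0,T_1]$ for arbitrary $T_1<\infty$: at a maximum attained at some time in $(T_0,T_1]$ one has $\partial_t Q\ge 0$, $\nabla Q=0$, $\nabla^2 Q\le 0$, and the pointwise computation \eqref{eq-sepa}--\eqref{con-b2} forces $\mathfrak{b}\le a_1$ there; otherwise the maximum is at $t=T_0$. In either case $\mathfrak{b}(\cdot,t)\le C_1\max\{\max_M\mathfrak{b}(\cdot,T_0),a_1\}$ on $[T_0,T_1]$ for a fixed constant $C_1$ independent of $T_1$, so letting $T_1\to\infty$ bounds $\mathfrak{b}$, hence bounds $\kappa_i$ from below, uniformly on $[T_0,\infty)$. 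On the compact interval $[0,T_0]$ Proposition \ref{preserve convex} already gives a positive lower bound, and taking the smaller of the two yields $\underline{\kappa}$.

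The crux is this third step. The estimates of \S\ref{sec3}--\S\ref{sec.upK} are anchored at a moving inball center and hold only on short time windows, which is exactly what forced the degenerating iteration in Proposition \ref{preserve convex}; the point is that Lemma \ref{innercon} lets one decouple the reference point from time once $t$ is large, after which the maximum principle needs no iteration and the bound is automatically uniform. I expect the only genuine work to be verifying that the $C^0$ and $C^1$ estimates of \S\ref{sec3} persist, with fixed constants, with respect to the single point $p$ on $[T_0,\infty)$ — which is immediate from $B_{c_1/2}(p)\subset\Omega_t\subset B_{C_0}(p)$ — and keeping track of the (finite) constants; strictly speaking $T_0$ and $\max_M\mathfrak{b}(\cdot,T_0)$ depend on the flow, hence on $n,M_0$ and the speed $f$ (through $\Theta$ and Assumption \ref{ass}), so the stated dependence on $n,M_0,\Theta$ should be read in that sense.
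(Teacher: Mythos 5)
Your proposal matches the paper's own proof: anchor the auxiliary function at the fixed limit point $p$ once Lemma \ref{innercon} guarantees $B_{c_1/4}(p)\subset\Omega_t$ for all $t\ge t^*$, run the maximum principle for $Q=\log\mathfrak{b}+A\rho_p$ on the whole interval $[t^*,\infty)$ without iterating over $\tau$-windows to get a uniform lower bound, cover $[0,t^*]$ by Proposition \ref{preserve convex}, and then feed the uniform lower bound back into the Tso argument of Proposition \ref{propKupp} to make $\bar c$ time-independent and obtain $\overline{\kappa}$. The only differences are cosmetic (using $c_1/2$ instead of $c_1/4$, and spelling out a bit more explicitly why the iteration disappears once the anchor is fixed), so this is the same argument.
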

\proof
Since the center $p_t$ of an inner ball of $\Omega_t$ converges to a fixed point $p$ as $t\to\infty$ and the inner radius of $\Omega_t$ has a positive lower bound $\rho_-(t)\geq c_1$, there exists a sufficiently large time $t^{*}$, depending on $c_1$ and hence depending only on $n$ and $M_0$, such that $d(p_t,p)<{c_1}/{4}$ for $t\geq t^{*}$. Then we have:
\begin{equation}\label{s7.0}
	B_{c_1/4}(p)\subset \Omega_t,\qquad \forall~t\geq t^{*}.
\end{equation}
Applying Proposition \ref{preserve convex} to the time interval $[0,t^*)$ and $[t^*,\infty)$ respectively gives a uniform lower bound for the principal curvatures of $M_t$ for all time $t>0$. In fact, on the time interval $[0,t^*)$, the estimate \eqref{s4.2-0} implies that the principal curvatures $\kappa_i$ of $M_t$ satisfy
\begin{equation}\label{s7.1}
	k_i\geq \Lambda_2^{-1}\mathrm{e}^{-\frac{2\Lambda_1}{\tau}t^*},\qquad t\in[0,t^*).
\end{equation}
While for time $t\in [t^*,\infty)$, since $B_{c_1/4}(p)\subset \Omega_t$ for all time $t\in [t^*,\infty)$. Then by the estimate \eqref{upperb-1} in the proof of Proposition \ref{preserve convex}, we have
\begin{equation*}
	\mathfrak{b}(p,t)\leq \max\left\{\max_{p\in M}{\mathfrak{b}(p,t^*)},a_3\right\}\mathrm{e}^{\Lambda_1}
\end{equation*}
for all $(p,t)\in M\times[t^*,\infty)$. This together with \eqref{s7.1} implies that the principal curvatures of $M_t$ are uniformly bounded from below by a positive constant  $\underline{\kappa}$ which depends only on $n, M_0$ and $\Theta$.

Once we have the uniform lower bound for the principal curvatures, the uniform upper bound for the Gauss curvature $K$ follows easily from the proof of Proposition \ref{propKupp}. In fact, the upper bound \eqref{s3.sigman-1} for $\sigma_{n-1}(\kappa)$ in the proof of Proposition \ref{propKupp} now has the form
\begin{equation*}
  \sigma_{n-1}(\kappa)\leq \frac{n}{\underline{\kappa}}K,
\end{equation*}
where the coefficient of $K$ does not depend on time $t$.

Therefore, combining the uniform upper bound on $K$ and the lower bound $\kappa_i\geq \underline{\kappa}$,  there exists a constant $\overline{\kappa}$ such that $\kappa_i\leq \overline{\kappa}$ for all $i=1,\dots,n$. This completes the proof of Lemma \ref{uni}.
\endproof

It follows from Lemma \ref{uni} that the flow \eqref{flow-VMCF} is uniformly parabolic for all time $t>0$. Then an argument similar to that in the proof of Theorem \ref{long} can be applied to show that all derivatives of curvatures are uniformly bounded on $M_t$ for all $t>0$. This together with Lemma \ref{subcon} implies there exists a sequence of times $t_i\to\infty$, such that $M_{t_i}$ converges smoothly to a geodesic sphere $S_{\rho_{\infty}}(p)$ as $t_i\to\infty$.

The full time convergence and the exponential convergence can be obtained by studying the linearization of the flow \eqref{flow-VMCF}. For each sufficiently large time $t_k$, we write $M_{t_k}$ as the graph of the radial function $\rho_{t_k}(\cdot)$ over $\mathbb{S}^n$ centered at $p_{t_k}$. For time $t$ sufficiently close to $t_k$, we rewrite the flow equation \eqref{flow-VMCF} as the scalar parabolic PDE
\begin{equation}\label{eq-rho}
	\left\{\begin{aligned}
		\partial_t\rho&=\left(\phi(t)-f(K)\right)\sqrt{1+{|\bar{\nabla}\rho|^2}/{\sinh^2\rho}},\quad t>t_k\\
		\rho(\cdot&,t_k)=\rho_{t_k}(\cdot).
	\end{aligned}\right.
\end{equation}
Note we can assume the oscillation of $\rho_{t_k}-\rho_{\infty}$ is sufficiently small by choosing $t_k$ large enough. By a direct computation, the linearized equation of the flow \eqref{eq-rho} about the geodesic sphere of radius $\rho_{\infty}$ is given by
\begin{equation}\label{eqdeta2}
	\frac{\partial }{\partial t}\eta=\frac{ (\coth\rho_{\infty})^{n-1}f'}{\sinh^2\rho_{\infty}}\left(\bar{\Delta}\eta+n\eta-\frac{n}{|\mathbb{S}^n|}\int_{\mathbb{S}^n}{\eta\, d\sigma}\right).
\end{equation}
where $f'$ is taken the value at the point $x=(\coth\rho_{\infty})^n$.

Since the oscillation of $\rho_{t_k}-\rho_{\infty}$ is sufficiently small, it follows exactly in \cite{Cab-Miq2007}, using \cite{Esc98}, that the solution $\rho(\cdot,t)$ of \eqref{eq-rho} starting at $\rho_{t_k}(\cdot)$ exists for all time and converges exponentially to a constant $\rho_{\infty}$. This means that the hypersurface $\overline{M}_t=$ graph $\rho(\cdot,t)$ solves \eqref{flow-VMCF} with initial condition $M_{t_k}$ and by uniqueness $\overline{M}_t$ coincides with $M_t$ for $t\geq t_k$, and hence the solution $M_t$ of \eqref{flow-VMCF} with initial condition $M_0$ converges exponentially as $t\to\infty$ to the geodesic sphere of radius $\rho_{\infty}$ (without correction of ambient isometry). This completes the proof of Theorem \ref{theo}.

\begin{bibdiv}
\begin{biblist}
\bib{AleSin10}{article}{
   author={Alessandroni, Roberta},
   author={Sinestrari, Carlo},
   title={Convexity estimates for a nonhomogeneous mean curvature flow},
   journal={Math. Z.},
   volume={266},
   date={2010},
   number={1},
   pages={65--82},
}	
\bib{AleSin10b}{article}{
   author={Alessandroni, Roberta},
   author={Sinestrari, Carlo},
   title={Evolution of hypersurfaces by powers of the scalar curvature},
   journal={Ann. Sc. Norm. Super. Pisa Cl. Sci. (5)},
   volume={9},
   date={2010},
   number={3},
   pages={541--571},
}

\bib{And2004}{article}{
	author={Andrews, Ben},
	title={Fully nonlinear parabolic equations in two space variables},
	eprint={arXiv: math/0402235},
	year={2004},
}

\bib{BenChenWei}{article}{
   author={Andrews, Ben},
   author={Chen, Xuzhong},
   author={Wei, Yong},
   title={Volume preserving flow and Alexandrov-Fenchel type inequalities in
   hyperbolic space},
   journal={J. Eur. Math. Soc. (JEMS)},
   volume={23},
   date={2021},
   number={7},
   pages={2467--2509},
}

\bib{BenWei}{article}{
	author={Andrews, Ben},
   author={Wei, Yong},
   title={Quermassintegral preserving curvature flow in hyperbolic space},
   journal={Geom. Funct. Anal.},
   volume={28},
   date={2018},
   number={5},
   pages={1183--1208},
}

\bib{AndWei20}{article}{
   author={Andrews, Ben},
   author={Wei, Yong},
   title={Volume preserving flow and geometric inequalities},
   conference={
      title={Proceedings of the International Consortium of Chinese
      Mathematicians 2018},
   },
   book={
      publisher={Int. Press, Boston, MA},
   },
   date={[2020] \copyright 2020},
   pages={119--147},
}

\bib{AW21}{article}{
   author={Andrews, Ben},
   author={Wei, Yong},
   title={Volume preserving flow by powers of the $k$-th mean curvature},
   journal={J. Differential Geom.},
   volume={117},
   date={2021},
   number={2},
   pages={193--222},
}

\bib{BA97}{article}{
   author={Barbosa, Jo\~{a}o Lucas Marques},
   author={Colares, Ant\^{o}nio Gervasio},
   title={Stability of hypersurfaces with constant $r$-mean curvature},
   journal={Ann. Global Anal. Geom.},
   volume={15},
   date={1997},
   number={3},
   pages={277--297},
}

\bib{Be-Pip2016}{article}{
   author={Bertini, Maria Chiara},
   author={Pipoli, Giuseppe},
   title={Volume preserving non-homogeneous mean curvature flow in
   hyperbolic space},
   journal={Differential Geom. Appl.},
   volume={54},
   date={2017},
   pages={448--463},
}
\bib{BeSin18}{article}{
   author={Bertini, Maria Chiara},
   author={Sinestrari, Carlo},
   title={Volume-preserving nonhomogeneous mean curvature flow of convex
   hypersurfaces},
   journal={Ann. Mat. Pura Appl. (4)},
   volume={197},
   date={2018},
   number={4},
   pages={1295--1309},
}

\bib{BeSin18b}{article}{
   author={Bertini, Maria Chiara},
   author={Sinestrari, Carlo},
   title={Volume preserving flow by powers of symmetric polynomials in the
   principal curvatures},
   journal={Math. Z.},
   volume={289},
   date={2018},
   number={3-4},
   pages={1219--1236},
}

\bib{Cab23}{article}{
   author={Cabezas-Rivas, Esther},
   title={Snapshots of Non-local Constrained Mean Curvature-Type Flows},
   conference={
      title={In: Alarc\'{o}n, A., Palmer, V., Rosales, C. (eds) New Trends in Geometric Analysis},
   },
   book={
      series={RSME Springer Series},
      volume={10},
      publisher={Springer, Cham.},
   },
   date={2023},
}

\bib{Cab-Miq2007}{article}{
   author={Cabezas-Rivas, Esther},
   author={Miquel, Vicente},
   title={Volume preserving mean curvature flow in the hyperbolic space},
   journal={Indiana Univ. Math. J.},
   volume={56},
   date={2007},
   number={5},
   pages={2061--2086},
}

\bib{CS10}{article}{
   author={Cabezas-Rivas, Esther},
   author={Sinestrari, Carlo},
   title={Volume-preserving flow by powers of the $m$th mean curvature},
   journal={Calc. Var. Partial Differential Equations},
   volume={38},
   date={2010},
   number={3-4},
   pages={441--469},
}

\bib{CaSche23}{article}{
   author={Cabezas-Rivas, Esther},
    author={Scheuer, Julian},
title={The quermassintegral preserving mean curvature flow in the sphere},
    journal={Anal. \& PDE},
    volume={17},
    date={2024},
    number={10},
    pages={3589–3621},
}

\bib{Chow97}{article}{
   author={Chow, Bennett},
   title={Geometric aspects of Aleksandrov reflection and gradient estimates
   for parabolic equations},
   journal={Comm. Anal. Geom.},
   volume={5},
   date={1997},
   number={2},
   pages={389--409},
}

\bib{Chow23}{article}{
   author={Chow, Bennett},
   title={Aleksandrov reflection for extrinsic geometric flows of Euclidean
   hypersurfaces},
   journal={Adv. Nonlinear Stud.},
   volume={23},
   date={2023},
   number={1},
   pages={Paper No. 20220034, 22pp},
}

 \bib{CG96}{article}{
   author={Chow, Bennett},
   author={Gulliver, Robert},
   title={Aleksandrov reflection and nonlinear evolution equations. I. The
   $n$-sphere and $n$-ball},
   journal={Calc. Var. Partial Differential Equations},
   volume={4},
   date={1996},
   number={3},
   pages={249--264},
}

\bib{ChoT97}{article}{
   author={Chow, Bennett},
   author={Tsai, Dong-Ho},
   title={Expansion of convex hypersurfaces by nonhomogeneous functions of
   curvature},
   journal={Asian J. Math.},
   volume={1},
   date={1997},
   number={4},
   pages={769--784},
}

\bib{ChoT98}{article}{
   author={Chow, Bennett},
   author={Tsai, Dong-Ho},
   title={Nonhomogeneous Gauss curvature flows},
   journal={Indiana Univ. Math. J.},
   volume={47},
   date={1998},
   number={3},
   pages={965--994},
}

\bib{CSV20}{article}{
   author={Cinti, Eleonora},
   author={Sinestrari, Carlo},
   author={Valdinoci, Enrico},
   title={Convex sets evolving by volume-preserving fractional mean
   curvature flows},
   journal={Anal. PDE},
   volume={13},
   date={2020},
   number={7},
   pages={2149--2171},
}

\bib{Esc98}{article}{
   author={Escher, Joachim},
   author={Simonett, Gieri},
   title={The volume preserving mean curvature flow near spheres},
   journal={Proc. Amer. Math. Soc.},
   volume={126},
   date={1998},
   number={9},
   pages={2789--2796},
}

\bib{Ger14}{article}{
   author={Gerhardt, Claus},
   title={Non-scale-invariant inverse curvature flows in Euclidean space},
   journal={Calc. Var. Partial Differential Equations},
   volume={49},
   date={2014},
   number={1-2},
   pages={471--489},
}

\bib{Guan23}{article}{
    author={Guan, Pengfei},
    author={Huang, Jiuzhou},
    author={Liu,Jiawei},
	title={Non-homogeneous fully nonlinear contracting flows of convex hypersurfaces},
    journal={Advanced Nonlinear Studies (Special issue in honor of Joel Spruck)},
	year={2024},
    volume={24},
    number={1},
    pages={141-154},
}


\bib{GL15}{article}{
   author={Guan, Pengfei},
   author={Li, Junfang},
   title={A mean curvature type flow in space forms},
   journal={Int. Math. Res. Not. IMRN},
   date={2015},
   number={13},
   pages={4716--4740},
}


\bib{GLW-CAG}{article}{
   author={Guo, Shunzi},
   author={Li, Guanghan},
   author={Wu, Chuanxi},
   title={Volume-preserving flow by powers of the $m$-th mean curvature in
   the hyperbolic space},
   journal={Comm. Anal. Geom.},
   volume={25},
   date={2017},
   number={2},
   pages={321--372},
}

\bib{Hui87}{article}{
   author={Huisken, Gerhard},
   title={The volume preserving mean curvature flow},
   journal={J. Reine Angew. Math.},
   volume={382},
   date={1987},
   pages={35--48},
}

\bib{Lam23}{article}{
   author={Lambert, Ben},
   author={M\"{a}der-Baumdicker, Elena},
   title={Nonlocal estimates for the volume preserving mean curvature flow
   and applications},
   journal={Calc. Var. Partial Differential Equations},
   volume={62},
   date={2023},
   number={7},
   pages={Paper No. 202},
}

\bib{LiLv20}{article}{
   author={Li, Guanghan},
   author={Lv, Yusha},
   title={Contracting convex hypersurfaces in space form by non-homogeneous
   curvature function},
   journal={J. Geom. Anal.},
   volume={30},
   date={2020},
   number={1},
   pages={417--447},
}

\bib{LZ24}{article}{
   author={Li, Haizhong},
   author={Zhang, Ruijia},
   title={A flow approach to the prescribed Gaussian curvature problem in
   $\Bbb H^{n+1}$},
   journal={Adv. Calc. Var.},
   volume={17},
   date={2024},
   number={3},
   pages={521--543},
}

\bib{Lie96}{book}{
   author={Lieberman, Gary M.},
   title={Second order parabolic differential equations},
   publisher={World Scientific Publishing Co., Inc., River Edge, NJ},
   date={1996},
   pages={xii+439},
}
\bib{Mak2012}{article}{
    author={Makowski, Matthias},
	title={Mixed volume preserving curvature flows in hyperbolic space},
	eprint={arXiv:1208.1898},
	year={2012},
}
\bib{Mc04}{article}{
   author={McCoy, James A.},
   title={The mixed volume preserving mean curvature flow},
   journal={Math. Z.},
   volume={246},
   date={2004},
   number={1-2},
   pages={155--166},
}
\bib{Mc05}{article}{
   author={McCoy, James A.},
   title={Mixed volume preserving curvature flows},
   journal={Calc. Var. Partial Differential Equations},
   volume={24},
   date={2005},
   number={2},
   pages={131--154},
}
\bib{Mc17}{article}{
   author={McCoy, James A.},
   title={More mixed volume preserving curvature flows},
   journal={J. Geom. Anal.},
   volume={27},
   date={2017},
   number={4},
   pages={3140--3165},
}

\bib{MS91}{article}{
   author={Montiel, Sebasti\'{a}n},
   author={Ros, Antonio},
   title={Compact hypersurfaces: the Alexandrov theorem for higher order
   mean curvatures},
   conference={
      title={Differential geometry},
   },
   book={
      series={Pitman Monogr. Surveys Pure Appl. Math.},
      volume={52},
      publisher={Longman Sci. Tech., Harlow},
   },
   date={1991},
   pages={279--296},
}

\bib{Pip22}{article}{
   author={Pipoli, Giuseppe},
   title={Nonhomogeneous expanding flows in hyperbolic spaces},
   journal={Ann. Global Anal. Geom.},
   volume={62},
   date={2022},
   number={4},
   pages={847--867},
}

\bib{RS2014}{book}{
   author={Schneider, Rolf},
   title={Convex bodies: the Brunn-Minkowski theory},
   series={Encyclopedia of Mathematics and its Applications},
   volume={151},
   edition={expanded edition},
   publisher={Cambridge University Press, Cambridge},
   date={2014},
   pages={xxii+736},
}

\bib{Sol05}{article}{
   author={Solanes, Gil},
   title={Integral geometry and the Gauss-Bonnet theorem in constant
   curvature spaces},
   journal={Trans. Amer. Math. Soc.},
   volume={358},
   date={2006},
   number={3},
   pages={1105--1115},
}
\bib{Sine15}{article}{
   author={Sinestrari, Carlo},
   title={Convex hypersurfaces evolving by volume preserving curvature
   flows},
   journal={Calc. Var. Partial Differential Equations},
   volume={54},
   date={2015},
   number={2},
   pages={1985--1993},
}

\bib{Tso85}{article}{
   author={Tso, Kaising},
   title={Deforming a hypersurface by its Gauss-Kronecker curvature},
   journal={Comm. Pure Appl. Math.},
   volume={38},
   date={1985},
   number={6},
   pages={867--882},
}

\bib{WX}{article}{
   author={Wang, Guofang},
   author={Xia, Chao},
   title={Isoperimetric type problems and Alexandrov-Fenchel type
   inequalities in the hyperbolic space},
   journal={Adv. Math.},
   volume={259},
   date={2014},
   pages={532--556},
}

\bib{WY2022}{article}{
   author={Wei, Yong},
   author={Yang, Bo},
   title={Volume preserving flows for convex curves and surfaces in the
   hyperbolic space},
   journal={J. Funct. Anal.},
   volume={283},
   date={2022},
   number={11},
   pages={Paper No. 109685},
}

\bib{WYZ2022}{article}{
    author={Wei, Yong},
    author={Yang, Bo},
	author={Zhou, Tailong},
	title={Volume preserving Gauss curvature flow of convex hypersurfaces in the hyperbolic space},
    journal={Trans. Amer. Math. Soc.},
	year={2024},
    volume={377},
    number={4},
    pages={2821–2854},
}

\end{biblist}
\end{bibdiv}

\end{document}